\documentclass[preprint,12pt]{elsarticle}
\usepackage{lipsum}
\makeatletter
\def\ps@pprintTitle{%
 \let\@oddhead\@empty
 \let\@evenhead\@empty
 \def\@oddfoot{}%
 \let\@evenfoot\@oddfoot}
\makeatother

\usepackage{amssymb}
\usepackage{hyperref}
\usepackage{amsfonts}
\usepackage{latexsym}
\usepackage{amsmath}
\usepackage{tikz}
\usetikzlibrary{decorations.markings,arrows,automata}

\input xy
\xyoption{all}


\newtheorem{thm}{Theorem}


\newtheorem{lem}[thm]   {Lemma}
\newtheorem{cor}[thm]   {Corollary}
\newtheorem{rem}[thm]   {Remark}
\newtheorem{defn}[thm]  {Definition}

\newtheorem{prop}[thm]  {Proposition}

\newcounter{foo}  \Alph{foo}

\newenvironment{proof}  {\par\noindent{\bf Proof}\ }
                             {\hfill$\Box$\par\medskip}



\newcommand{\id}    {\mathrm{id}}

\newcommand{\Es}	{\mathrm{Es}}

\newcommand{\cat}   {\mathrm{cat}}

\newcommand{\gscat}    {\mathrm{gscat}}
\newcommand{\scat}    {\mathrm{scat}}





\begin{document}

\begin{frontmatter}

\title{On the Lusternik-Schnirelmann category of a simplicial map}

\author{Nicholas A. Scoville\footnote{Department of Mathematics and Computer Science, Ursinus College, Collegeville, PA  19426, {\tt nscoville@ursinus.edu}}, Willie Swei \footnote{Department of Mathematics and Computer Science, Ursinus College, Collegeville, PA  19426, {\tt wiswei@ursinus.edu}}}


\begin{abstract}
In this paper, we study the Lusternik--Schnirelmann category of a simplicial map between simplicial complexes, generalizing the simplicial category of a complex to that of a map. Several properties of this new invariant are shown, including its relevance to products and fibrations.   
We relate this category of a map to the classical Lusternik--Schnirelmann category of a map between finite topological spaces.  Finally, we show how the simplicial category of a map may be used to define and study a simplicial version of the essential category weight of J. Strom. 
\end{abstract}

\begin{keyword}
  simplcial Lusternik--Schnirelmann category\sep category of a map\sep contiguity\sep strong homotopy type\sep essential category weight\sep finite spaces \sep order complex
\MSC 55M30\sep 55U05 \sep 55U10 \sep 06F30
\end{keyword}

\end{frontmatter}

\section{Introduction}
The Lusternik--Schnirelmann (LS) category of a topological space $X$, denoted $\cat(X)$, was first defined by Lusternik and Schnirelmann in \cite{L-S} for the purpose of studying critical points on manifolds.  Since its humble inception, it has inspired a wide variety of novel techniques in algebraic topology as well as ascertaining many new and interesting numerical invariants of topological spaces.  See, for example, the book \cite{C-L-O-T} devoted to the topic.   

Recently, there has been a growing interest in carrying the LS category into the simplicial or discrete setting \cite{AS,F-M-V,Tanaka16}.  In this paper, we continue the work begun by Fern{\'a}ndez-Ternero et al. in \cite{F-M-V} and \cite{F-M-M-V} by studying the simplicial LS category of a simplicial map.  In addition to continuing the work of these authors, our work extends several results from the classical setting due to Bernstein and Ganea \cite{BG61}, Hardie \cite{H-70}, and Strom. After establishing the necessary background in Section \ref{Background}, we give the definition and basic properties of the simplicial LS category of a simplicial map in Section \ref{Basic properties}. The rest of Section \ref{Simplicial category of a map} is devoted to studying the simplicial category of a product map as well as a simplicial fibration.

Finally, we apply our theory in Section \ref{Applications} by giving a relationship between the simplicial category of a map and the classical category of the corresponding finite space. In addition, we continue to illustrate the phenomena between  the simplicial and classical LS category by defining and studying basic properties of the so-called essential category of a map, following the work of Strom \cite{Strom97, Strom98}.

\section{Background}\label{Background}

In this Section we establish the necessary background that will be used throughout this paper.  All simplicial complexes are assumed to be finite and (edge-path) connected, and all maps between simplicial complexes are assumed to be simplicial maps.  Following the convention in algebraic topology, with domain $K$ and codomain $L$ understood from context, we will use $*$ to denote the simplicial map $*\colon K \to L$ that sends everything to some vertex in $L$. 

We begin by defining the simplicial maps and our notion of equivalence. Our references for the fundamentals of simplicial complexes and maps between them is \cite{S-66} and \cite{BThesis}.

\begin{defn} Let $K$ and $L$ be (abstract) simplicial complexes.  A subset $U\subseteq K$ which is also a simplicial complex is called a \textbf{subcomplex}.
A \textbf{simplicial map} $f \colon K \to L$ is a function from the vertex set of $K$ to the vertex set of $L$ such that if $\sigma$ is a simplex of $K$, then $f(\sigma)$ is a simplex of $L$. Two maps $f,g\colon K\to L$ are said to be \textbf{contiguous}, denoted $f\sim_c g$,  if whenever $\sigma$ is a simplex in $K$, $f(\sigma)\cup g(\sigma)$ is a simplex in $L$. Furthermore, we say that $f$ and $g$  are in the same \textbf{contiguity class}, denoted $f\sim g$ if there exists a sequence of simplicial maps $f_0,f_1,\ldots,f_m\colon  K\to L$ such that $f=f_0\sim_c f_1\sim_c\ldots \sim_c f_{m-1}\sim_c f_m=g$. If $f\sim *$, we say that $f$ is in the \textbf{contiguity class of a vertex.}
\end{defn}

\begin{defn}
Simplicial complexes $K$ and $L$ are said to have the same \textbf{strong homotopy type}, denoted $K\sim L$, if there are simplicial maps $f\colon K\to L$ and  $g\colon L\to K$ such that $g\circ f\sim \id_{K}$ and $f\circ g\sim \id_L$.  In this case, we say that $f$ is a \textbf{strong homotopy equivalence}.
\end{defn}

Strong homotopy was defined and studied by Barmak and Minian in \cite{B-M-12}.  It was then used in \cite{F-M-V} to define the notion of the simplicial Lusternik--Schnirelmann category of a simplicial complex.




\begin{defn}
Let $K$ be a simplicial complex. A subcomplex $U\subseteq K$ is \textbf{categorical} if $i_U\sim *$ where $i_U\colon U\rightarrow K$ is the inclusion and $*\colon U\to K$ is the map sending everything in $U$ to a single vertex. The \textbf{simplicial Lusternik--Schnirelmann category, simplicial LS category, or simplicial category of $K$}, denoted $\scat(K)$, is the least integer $n$ such that $K$ can be covered by categorical subcomplexes $U_0,U_1,\ldots,U_n$. We call $U_0, U_1, \ldots, U_n$ a \textbf{categorical cover of} $K$.
\end{defn}

We also have the notion of a simplicial ``geometric'' category which will play a role in Section \ref{Factorization}.

\begin{defn} The \textbf{simplicial geometric category of $K$}, denoted $\gscat(K)$, is the least integer $m\geq 0$ such that there exists subcomplexes $U_0, U_1, \ldots, U_m$ covering $K$ with $\id_{U_j}\sim *$ for all $0\leq j \leq m$. 
\end{defn}

The following relationship between $\scat$ and $\gscat$ is immediate. 

\begin{prop}\label{scat less gscat}\cite[Proposition 4.2]{F-M-V} For every simplicial complex $K$, $\scat(K)\leq \gscat(K)$.
\end{prop}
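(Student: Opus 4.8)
The plan is to show that any family of subcomplexes witnessing $\gscat(K)$ is already a categorical cover, so that the very same family bounds $\scat(K)$. Concretely, if $U_0,\ldots,U_m$ cover $K$ and satisfy $\id_{U_j}\sim *$ as self-maps of $U_j$ for each $j$, I will argue that the inclusion $i_{U_j}\colon U_j\to K$ satisfies $i_{U_j}\sim *$ as a map into $K$, i.e.\ that each $U_j$ is categorical. Granting this, $U_0,\ldots,U_m$ is a categorical cover of $K$, and since $\gscat(K)$ is attained by some such cover, we get $\scat(K)\le m=\gscat(K)$.

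The one tool I need is that contiguity is preserved under post-composition with a fixed simplicial map. Precisely, if $\varphi\sim_c\psi\colon U\to U$ and $h\colon U\to K$ is simplicial, then $h\circ\varphi\sim_c h\circ\psi$: for any simplex $\sigma$ of $U$, the set $\varphi(\sigma)\cup\psi(\sigma)$ is a simplex of $U$, hence its image $h(\varphi(\sigma))\cup h(\psi(\sigma))$ is a simplex of $K$, which is exactly the contiguity condition for $h\circ\varphi$ and $h\circ\psi$. Iterating this along a contiguity sequence shows that $\varphi\sim\psi$ implies $h\circ\varphi\sim h\circ\psi$.

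Now fix $j$ and take $h=i_{U_j}$, the inclusion. Since $\id_{U_j}\sim *$, post-composing with $i_{U_j}$ yields $i_{U_j}\circ\id_{U_j}\sim i_{U_j}\circ *$. The left-hand side is just $i_{U_j}$, while $i_{U_j}\circ *$ is a constant map $U_j\to K$, sending every vertex of $U_j$ to the image under $i_{U_j}$ of the single vertex used by $*$. Hence $i_{U_j}\sim *$ as a map into $K$, so $U_j$ is categorical, which completes the argument.

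There is essentially no hard step here: the content is entirely the observation that post-composition with the inclusion turns a null-contiguity inside $U_j$ into a null-contiguity inside $K$. The only point requiring a moment's care is matching the two meanings of $*$ in play -- the constant self-map of $U_j$ appearing in the definition of $\gscat$, versus the constant map into $K$ appearing in the definition of \emph{categorical} -- and this is resolved precisely by noting that the composite of the inclusion with a constant self-map is again constant.
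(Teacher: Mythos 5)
Your proof is correct and is exactly the argument the paper has in mind: the paper states this proposition without proof, calling it ``immediate'' and citing \cite{F-M-V}, and the immediacy is precisely your observation that post-composition with the inclusion $i_{U_j}$ (which preserves contiguity, hence contiguity classes) turns $\id_{U_j}\sim *$ into $i_{U_j}\sim *$, so any cover witnessing $\gscat(K)$ is already a categorical cover. Your care in distinguishing the two constant maps $*$ (the self-map of $U_j$ versus the map into $K$) is the right point to flag, and your resolution of it is correct.
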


\section{Simplicial category of a map}\label{Simplicial category of a map}

This Section is devoted to investigating the simplicial category of a simplicial map, our main object of study.  We note that many of these results are analogues of known results in the classical case (see the book \cite{C-L-O-T}) as well as extensions of results in \cite{F-M-V} and \cite{F-M-M-V} for simplicial complexes.  

\subsection{Basic properties}\label{Basic properties}

We begin with the definition of the simplicial category of a simplicial map.

\begin{defn}
Let $f\colon K\rightarrow L$ be a simplicial map. The \textbf{simplicial Lusternik--Schnirelmann category, simplicial LS category, or simplicial category of $f$}, denoted $\scat(f)$, is the least integer $n$ such that $K$ can be covered by subcomplexes $U_0, U_1,\ldots ,U_n$ with $f|_{U_j}\sim *$ for all $0\leq j\leq n$. We call $U_0, \ldots, U_n$ a \textbf{categorical cover of} $f$.
\end{defn}

Immediately, we see that this generalizes the definition of simplicial category of a simplicial complex. 

\begin{prop}\label{id same} 
Let $K$ be a simplicial complex. Then
$\scat(\id_K)=\scat(K)$. 
\end{prop}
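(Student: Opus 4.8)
The plan is to show that the two definitions produce the same cover condition, so that the minimizing integer coincides. The key observation is that when $f = \id_K$, the restriction $f|_{U_j}$ is exactly the inclusion $i_{U_j}\colon U_j \to K$. Indeed, $\id_K$ restricted to a subcomplex $U_j$ sends each vertex of $U_j$ to itself as a vertex of $K$, which is precisely the inclusion map. Thus the defining condition for a categorical cover of the map $\id_K$, namely $\id_K|_{U_j}\sim *$, becomes $i_{U_j}\sim *$, which is exactly the defining condition for $U_j$ to be a categorical subcomplex in the definition of $\scat(K)$.

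First I would make this identification precise: for any subcomplex $U_j \subseteq K$, we have $\id_K|_{U_j} = i_{U_j}$ as simplicial maps $U_j \to K$. This is essentially immediate from the definitions, since both are determined by their action on vertices and both fix each vertex of $U_j$. Once this is established, a collection $U_0, \ldots, U_n$ is a categorical cover of the map $\id_K$ (in the sense of $\scat(f)$) if and only if it is a categorical cover of the complex $K$ (in the sense of $\scat(K)$), because the conditions $\id_K|_{U_j}\sim *$ and $i_{U_j}\sim *$ are literally the same condition.

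With the covers in bijection, the two invariants minimize the same integer $n$ over the same collection of admissible covers, and so $\scat(\id_K) = \scat(K)$. I would phrase this as a double inequality if a cleaner exposition is desired: any categorical cover witnessing $\scat(K) \le n$ witnesses $\scat(\id_K) \le n$, and conversely, giving both inequalities and hence equality.

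I do not expect any genuine obstacle here; the result is essentially a definitional unwinding. The only point requiring a word of care is the identification $\id_K|_{U_j} = i_{U_j}$, and in particular noting that the relevant notion of $*$ is the same in both definitions (a map to a single vertex of $K$). Everything else follows formally.
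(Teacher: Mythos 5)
Your proof is correct and follows essentially the same route as the paper's: both rest on the identification $\id_K|_{U_j} = i_{U_j}$, which makes the two cover conditions literally identical, and then conclude equality via the resulting bijection of categorical covers. No gaps here.
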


\begin{proof} 
Assume that $\scat(\id_K) =n$. Then $K$  can be covered by subcomplexes $U_0,U_1,\ldots,U_n$ with $\id_K|_{U_j}\sim *$.  Since $\id_K|_{U_j}=i_{U_j}$ and $\id_K|_{U_j}\sim *$, $i_{U_j}\sim *$ so that $U_0,U_1,\ldots, U_n$ forms a categorical cover of $K$. The same reasoning shows that a categorical cover of $K$ yields a categorical cover of $\id_K$. Thus $\scat(\id_K)=\scat(K)$.
\end{proof}

As in the classical case, the simplicial category of a composition bounds below the simplicial category of either map.

\begin{prop}\label{comp prop}
If $f\colon K\to L$ and $g\colon L\to M$, then 

$$\scat(g\circ f)\leq \min\{\scat(g),\scat(f)\}.$$
\end{prop}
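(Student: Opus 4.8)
The plan is to prove the two inequalities $\scat(g\circ f)\le\scat(f)$ and $\scat(g\circ f)\le\scat(g)$ separately, both resting on one elementary lemma: contiguity is preserved under pre- and post-composition with simplicial maps. Concretely, if $h\sim_c h'$ are maps with common domain and codomain, and $p,q$ are simplicial maps for which the composites are defined, then $p\circ h\sim_c p\circ h'$ and $h\circ q\sim_c h'\circ q$. First I would verify this directly from the definition: for a simplex $\sigma$ we have $(p\circ h)(\sigma)\cup(p\circ h')(\sigma)=p\bigl(h(\sigma)\cup h'(\sigma)\bigr)$, the image under the simplicial map $p$ of a simplex, hence a simplex; the precomposition case is identical since $q(\sigma)$ is a simplex. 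Chaining along a contiguity sequence and invoking transitivity then upgrades this to: $h\sim h'$ implies $p\circ h\sim p\circ h'$ and $h\circ q\sim h'\circ q$.

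For $\scat(g\circ f)\le\scat(f)$, I would start from a categorical cover $U_0,\dots,U_n$ of $f$ with $n=\scat(f)$, so $f|_{U_j}\sim *$ for each $j$. Post-composing with $g$ and applying the lemma gives $(g\circ f)|_{U_j}=g\circ(f|_{U_j})\sim g\circ *$. The key observation is that $g\circ *$ is again a constant map, sending everything to $g(v)$ where $v$ is the target vertex of $*$, and so equals $*$. Thus $(g\circ f)|_{U_j}\sim *$, the same cover is categorical for $g\circ f$, and $\scat(g\circ f)\le n$.

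For $\scat(g\circ f)\le\scat(g)$, the idea is to pull a categorical cover of $g$ back along $f$. Given a cover $V_0,\dots,V_m$ of $L$ with $g|_{V_j}\sim *$ and $m=\scat(g)$, I would set $U_j=f^{-1}(V_j):=\{\sigma\in K:f(\sigma)\in V_j\}$. Two routine checks are needed: that each $U_j$ is a subcomplex, which follows because $\tau\subseteq\sigma\in U_j$ forces $f(\tau)\subseteq f(\sigma)\in V_j$ and $V_j$ is face-closed; and that the $U_j$ cover $K$, since any simplex $\sigma$ has $f(\sigma)\in V_j$ for some $j$, whence $\sigma\in U_j$. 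By construction $f$ carries $U_j$ into $V_j$, so I may corestrict to a map $f|_{U_j}\colon U_j\to V_j$ and factor $(g\circ f)|_{U_j}=(g|_{V_j})\circ(f|_{U_j})$. Precomposing the contiguity $g|_{V_j}\sim *$ with $f|_{U_j}$ yields $(g\circ f)|_{U_j}\sim *\circ(f|_{U_j})=*$, so $U_0,\dots,U_m$ is a categorical cover of $g\circ f$ and $\scat(g\circ f)\le m$.

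The main obstacle is bookkeeping rather than depth: the only place requiring care is the $\scat(g)$ bound, where one must confirm that the pullbacks $f^{-1}(V_j)$ are genuinely subcomplexes and do cover $K$, and that the restriction of $g\circ f$ to $U_j$ really factors through $V_j$ so the precomposition form of the lemma applies. Taking the minimum of the two bounds gives the claim.
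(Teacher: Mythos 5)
Your proposal is correct and follows essentially the same route as the paper's proof: restrict a categorical cover of $f$ and post-compose with $g$ for one inequality, and pull a categorical cover of $g$ back along $f$ via $U_j=f^{-1}(V_j)$ for the other. The only difference is that you make explicit what the paper leaves implicit --- the lemma that pre- and post-composition preserve contiguity classes, and the verification that the preimages $f^{-1}(V_j)$ are subcomplexes covering $K$ --- which is sound bookkeeping, not a different argument.
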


\begin{proof}
Assume that $f\colon K\to L$ and  $g\colon L\to M$. The intention is to show that $\scat(g\circ f)\leq \min\{\scat(g),\scat(f)\}$. We first show that $\scat(g\circ f) \leq \scat(f)$. Write $\scat(f)=n$ so that there exists $U_0,U_1,\ldots,U_n\subseteq K$ covering $K$ with $f|_{U_j}\sim *$. We claim that $(g\circ f)|_{U_j}\sim *$. Observe that $(g\circ f)|_{U_j}=g\circ (f|_{U_j})\sim g\circ *\sim*$. Thus, $\scat(g\circ f) \leq \scat(f)=n$.\\

Now write $\scat(g)=m$. Then there exists $V_0,V_1,\ldots,V_m\subseteq L$ such that $g|_{V_j}\sim *$.  Define $U_j:= f^{-1}(V_j)$ for all $0\leq j \leq m$. This clearly covers $K$. Observe that 

$$
\xymatrix{
U_j\ar[rr]^{(g\circ f)|_{U_j}}\ar[d]_{f|_{U_j}}& & M\\
V_j\ar[urr]_{g|_{V_j}} & &
}
$$
commutes up to contiguity class. Since $g|_{V_j}\sim *$, we have $(g\circ f)|_{U_j}\sim *$.
\end{proof}
 
It then easily follows that the simplicial category of a map bounds below the simplicial category of either simplicial complex. 

\begin{prop}\label{map min}
Let $f\colon K\to L$. Then $\scat(f)\leq \min\{\scat(K), \scat(L)\}$. 
\end{prop}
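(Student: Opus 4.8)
The plan is to recognize this statement as an immediate corollary of the two preceding propositions, obtained by composing $f$ with the identity map on each side. Since Proposition \ref{comp prop} already shows that a composition has category bounded above by the category of either factor, and Proposition \ref{id same} identifies $\scat(\id)$ with the category of the underlying complex, no genuinely new work is required; the argument is simply a matter of choosing the right factorizations of $f$.

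First I would write $f = f\circ \id_K$ and apply Proposition \ref{comp prop} to the pair $\id_K\colon K\to K$ and $f\colon K\to L$. This gives
$$\scat(f)=\scat(f\circ \id_K)\leq \min\{\scat(f),\scat(\id_K)\}\leq \scat(\id_K),$$
and by Proposition \ref{id same} we have $\scat(\id_K)=\scat(K)$, so $\scat(f)\leq \scat(K)$.

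Next I would symmetrically write $f = \id_L\circ f$ and apply Proposition \ref{comp prop} to the pair $f\colon K\to L$ and $\id_L\colon L\to L$, obtaining
$$\scat(f)=\scat(\id_L\circ f)\leq \min\{\scat(\id_L),\scat(f)\}\leq \scat(\id_L)=\scat(L),$$
again invoking Proposition \ref{id same} for the last equality. Combining the two bounds then yields $\scat(f)\leq \min\{\scat(K),\scat(L)\}$, as desired.

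Since the substance of the result lives entirely in the earlier propositions, there is no real obstacle here. The only point demanding minor care is bookkeeping: Proposition \ref{comp prop} is stated for a composite $g\circ f$, so one must assign the roles of the identity and of $f$ correctly in each of the two applications. Getting these compositions in the right order is what makes each inequality fall out cleanly.
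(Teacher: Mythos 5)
Your proof is correct and follows essentially the same route as the paper: the authors likewise write $\scat(f)=\scat(f\circ \id_K)\leq \scat(\id_K)=\scat(K)$ via Propositions \ref{comp prop} and \ref{id same}, and note that ``the same argument'' (your factorization $f=\id_L\circ f$) gives $\scat(f)\leq \scat(L)$. Your version merely spells out explicitly the second application that the paper leaves implicit.
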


\begin{proof}
Assume that $f\colon K\to L$. The intention is to show that $\scat(f)\leq \min \{\scat(K),\scat(L)\}$. Observe that 
$$
\scat(f)=\scat(f\circ \id_K)\leq \scat(\id_K)=\scat(K)
$$

\noindent by Propositions \ref{comp prop} and \ref{id same}. The same argument shows that $\scat(f)\leq \scat(L)$. Thus $\scat(f)\leq \min \{\scat(K), \scat(L)\}.$
\end{proof}

\begin{rem} It is easy to construct examples where the above inequalities are strict by letting $f=*$ and $K$ have arbitrarily large category.  
\end{rem}

We now show that simplicial category is well-defined up to contiguity class.  It is a matter of showing it is well-defined up to contiguity followed by formal induction.   

\begin{prop}\label{well cont}
Let $f,g\colon K\to L$. If $f\sim g$, then $\scat(f)=\scat(g)$. 
\end{prop}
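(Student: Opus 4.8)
The plan is to follow the two-step strategy indicated in the preceding remark: first establish the claim in the special case where $f\sim_c g$ (a single contiguity), and then extend to an arbitrary contiguity class by induction on the length of a defining contiguity sequence.

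For the single-contiguity case, I would first isolate the key lemma that contiguity is preserved under restriction to a subcomplex: if $f\sim_c g$ and $U\subseteq K$ is any subcomplex, then $f|_U\sim_c g|_U$. This is immediate from the definitions. Any simplex $\sigma$ of $U$ is also a simplex of $K$, so $f(\sigma)\cup g(\sigma)$ is a simplex of $L$; since $f|_U$ and $g|_U$ agree with $f$ and $g$ on $U$, this says exactly that $f|_U(\sigma)\cup g|_U(\sigma)$ is a simplex of $L$, which is precisely the contiguity condition for the restrictions. In particular $f|_U\sim g|_U$.

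Next, assuming $f\sim_c g$, I would show $\scat(g)\leq \scat(f)$. Writing $\scat(f)=n$, choose a categorical cover $U_0,\ldots,U_n$ of $f$, so that $f|_{U_j}\sim *$ for each $j$. By the restriction lemma, $f|_{U_j}\sim g|_{U_j}$, and transitivity of $\sim$ then gives $g|_{U_j}\sim *$. Hence the same cover is a categorical cover of $g$, so $\scat(g)\leq n=\scat(f)$. Since contiguity is symmetric, interchanging the roles of $f$ and $g$ yields $\scat(f)\leq \scat(g)$, and therefore $\scat(f)=\scat(g)$.

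Finally, for the general case $f\sim g$, fix a contiguity sequence $f=f_0\sim_c f_1\sim_c\cdots\sim_c f_m=g$. By the single-contiguity case just proved, $\scat(f_i)=\scat(f_{i+1})$ for every $i$, so a trivial induction on $m$ gives $\scat(f)=\scat(f_0)=\scat(f_m)=\scat(g)$. The only genuine content here is the restriction lemma; once that is in hand, everything else is the formal bookkeeping the remark promises, and I anticipate no real obstacle.
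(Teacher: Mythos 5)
Your proposal is correct and follows essentially the same route as the paper: the paper likewise reduces to the single-contiguity case, observes that $f\sim_c g$ implies $f|_{U_j}\sim_c g|_{U_j}$ because any simplex of $U_j$ is a simplex of $K$, transfers a categorical cover of one map to the other, and appeals to symmetry and the formal induction on the contiguity sequence. Your write-up merely makes the restriction lemma and the induction step more explicit than the paper does.
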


\begin{proof}
Assume without loss of generality that $f\sim_c g$ and let $U_0, U_1, \ldots, U_n$ be a categorical covering of $g$ so that $g|_{U_j}\sim *$. By definition of contiguity, $f(\sigma)\cup g(\sigma)$ is a simplex in $L$, so $f|_{U_j}(\sigma)\cup g|_{U_j}(\sigma)$ is also a simplex in $L$. Thus, $f|_{U_j}\sim_c g|_{U_j}\sim *$ and $\scat(f)\leq \scat(g)$.  The other direction is identical. 
\end{proof}

The following Corollary gives an alternative proof to and generalizes Theorem 4.3 of \cite{F-M-V}.

\begin{cor}
If $f\colon K\to L$ is a strong homotopy equivalence, then $\scat(f)=\scat(K)=\scat(L).$
\end{cor}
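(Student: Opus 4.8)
The plan is to prove this by chaining together the four propositions already established, since a strong homotopy equivalence is exactly the structure needed to sandwich $\scat(f)$ between $\scat(K)$ and $\scat(L)$ from both sides. By definition, $f\colon K\to L$ being a strong homotopy equivalence means there is a simplicial map $g\colon L\to K$ with $g\circ f\sim \id_K$ and $f\circ g\sim \id_L$. I would record these two contiguity-class identities at the outset, since everything flows from them together with Proposition \ref{well cont} (invariance of $\scat$ under contiguity class) and Proposition \ref{id same} (which identifies $\scat(\id_K)$ with $\scat(K)$).

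First I would obtain the two upper bounds directly from Proposition \ref{map min}, which gives $\scat(f)\leq \scat(K)$ and $\scat(f)\leq \scat(L)$ in one stroke. For the reverse inequalities, I would feed the composites into Proposition \ref{comp prop}. Since $g\circ f\sim \id_K$, Proposition \ref{well cont} combined with Proposition \ref{id same} yields $\scat(K)=\scat(\id_K)=\scat(g\circ f)$, and then Proposition \ref{comp prop} gives $\scat(g\circ f)\leq \scat(f)$, so $\scat(K)\leq \scat(f)$. Together with the upper bound this forces $\scat(f)=\scat(K)$. Running the symmetric argument with $f\circ g\sim \id_L$ gives $\scat(L)=\scat(\id_L)=\scat(f\circ g)\leq \scat(f)\leq \scat(L)$, hence $\scat(f)=\scat(L)$. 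Combining the two equalities yields $\scat(f)=\scat(K)=\scat(L)$, as claimed.

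There is no genuine obstacle here: the argument is purely formal, and the only point requiring attention is bookkeeping the direction of each inequality so that the upper bounds from Proposition \ref{map min} meet the lower bounds coming from the composites. The one substantive step is recognizing that the \emph{homotopy} inverse $g$ must be invoked, rather than $f$ alone, so that $g\circ f$ and $f\circ g$ can be rewritten up to contiguity class as identities before Proposition \ref{comp prop} is applied.
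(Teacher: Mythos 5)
Your proposal is correct and matches the paper's own proof essentially step for step: both arguments invoke the homotopy inverse $g$, use Proposition \ref{well cont} and Proposition \ref{id same} to rewrite $\scat(g\circ f)$ (resp.\ $\scat(f\circ g)$) as $\scat(K)$ (resp.\ $\scat(L)$), and sandwich $\scat(f)$ via Propositions \ref{comp prop} and \ref{map min}. No gaps; nothing further is needed.
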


\begin{proof}
Since $f$ is a strong homotopy equivalence, there exists $g\colon L\to K$ such that $g\circ f\sim \id_k$ and $f\circ g \sim \id_L$. We have
\begin{alignat*}{2}
\scat(K)&=\scat(\id_K) && \qquad \text{by Proposition \ref{id same}}\\
& =\scat(g\circ f) && \qquad \text{by Proposition \ref{well cont}}\\
& \leq \scat(f) && \qquad \text{by Proposition \ref{comp prop}}\\
&\leq \scat(K) && \qquad \text{by Proposition \ref{map min}.}\\
\end{alignat*}

It follows that $\scat(K)=\scat(f)$.  The exact same argument shows that $\scat(L)=\scat(f)$.
\end{proof}

We may also relate the simplicial category of a map to the subspace simplicial category.  First, a definition.

\begin{defn}
Let $A\subseteq K$ be a subcomplex. The \textbf{subspace simplicial category of $A$ in $K$}, denoted $\scat_K(A)$, is the least integer $n$ such that there exists $U_0, U_1, \ldots, U_n\subseteq K$ that cover $A$ and each $U_i$ is categorical in $K$. 
\end{defn}

\begin{prop}
Let $i\colon A\to K$ be the inclusion. Then $\scat(i)=\scat_K(A)$.
\end{prop}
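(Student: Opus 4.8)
The plan is to unwind both sides and observe that the two invariants differ only in where the covering pieces are allowed to live. Since $i$ is the inclusion, for any subcomplex $U\subseteq A$ the restriction $i|_U\colon U\to K$ is precisely the inclusion of $U$ into $K$; hence $i|_U\sim *$ says exactly that $U$ is categorical in $K$. Thus $\scat(i)$ is the least $n$ for which $A$ is covered by subcomplexes $U_0,\ldots,U_n$ \emph{of $A$}, each categorical in $K$, whereas $\scat_K(A)$ is the least $n$ for which $A$ is covered by subcomplexes $U_0,\ldots,U_n$ \emph{of $K$}, each categorical in $K$. It therefore suffices to prove the two inequalities $\scat_K(A)\leq\scat(i)$ and $\scat(i)\leq\scat_K(A)$.

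The inequality $\scat_K(A)\leq\scat(i)$ is immediate: a categorical cover $U_0,\ldots,U_n$ of $i$ consists of subcomplexes of $A$, and since $A\subseteq K$ each $U_j$ is also a subcomplex of $K$; they cover $A$ and are categorical in $K$, so they witness $\scat_K(A)\leq n$.

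For the reverse inequality, write $\scat_K(A)=n$ and choose subcomplexes $V_0,\ldots,V_n\subseteq K$ covering $A$ with each $V_j$ categorical in $K$. Set $U_j:=V_j\cap A$. Each $U_j$ is a subcomplex of $A$ (an intersection of subcomplexes), and $\bigcup_j U_j=\big(\bigcup_j V_j\big)\cap A=A$ since the $V_j$ cover $A$, so the $U_j$ cover $A$. The key step is to check that each $U_j$ is still categorical in $K$. This follows from the general fact that a subcomplex of a categorical subcomplex is categorical: the inclusion $U_j\to K$ factors as $U_j\hookrightarrow V_j\hookrightarrow K$, and since contiguity is preserved under precomposition — if $h\sim_c h'$ as maps $V_j\to K$ then $h\circ\iota\sim_c h'\circ\iota$ for the inclusion $\iota\colon U_j\to V_j$ — any contiguity sequence from the inclusion $V_j\hookrightarrow K$ to $*$ precomposes with $\iota$ to a contiguity sequence from the inclusion $U_j\to K$ to $*$. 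Hence $i|_{U_j}\sim *$, the $U_j$ form a categorical cover of $i$, and $\scat(i)\leq n$. Combining the two inequalities gives $\scat(i)=\scat_K(A)$.

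The main obstacle is the monotonicity fact in the previous paragraph, namely that categoricity in $K$ passes to subcomplexes. Everything else is bookkeeping, but this step is where one must verify that cutting a categorical piece $V_j$ down to $V_j\cap A$ does not destroy membership in the contiguity class of a vertex. The verification is short once one notes that precomposing a contiguity with a simplicial map preserves it, which is the mirror image of the postcomposition fact already used in the proof of Proposition \ref{comp prop}.
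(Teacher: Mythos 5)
Your proof is correct and follows the same route as the paper: the easy direction observes that a categorical cover of $i$ already witnesses $\scat_K(A)\leq\scat(i)$, and the other direction intersects a cover of $A$ by categorical subcomplexes of $K$ with $A$ and factors $i|_{V_j\cap A}$ as $i_{V_j}\circ a$, using that precomposition preserves the contiguity class (the paper writes this as $i|_{U_j\cap A}=i_{U_j}\circ a\sim *\circ a\sim *$). Your extra verification that cutting a categorical piece down to its intersection with $A$ preserves categoricity is exactly the step the paper compresses into that one line, so the two arguments coincide.
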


\begin{proof}
Assume that $i\colon A\to K$ is the inclusion, and write $\scat_K(A)=n$.  Then there exists $U_0, \ldots, U_n \subseteq K$ covering $A$ with $i_{U_j}\sim *$ where $i_{U_j}\colon U_j\to K$ is the inclusion.  Then $U_0\cap A, U_1\cap A,\ldots, U_n\cap A$ is a categorical cover of $i$ since $i|_{U_j\cap A}=i_{U_j}\circ a \sim *\circ a\sim *$ where $a\colon U_j\cap A\to U_j$ is the inclusion.  Hence $\scat(i)\leq \scat_K(A)$.  To see the other direction, observe that if $V_j$ is a member of a categorical cover of $i$, then $*\sim i|_{V_j}=i_{V_j}$.  
\end{proof}

\subsection{Factorization}\label{Factorization}

In this Section, we show that $\scat(f)\leq n$ may be characterized by factorizing through a simplicial complex $K'$ with $\gscat(K')\leq n$.  This is an  analogue of the result of Berstein and Ganea \cite[Proposition 1.7]{BG61} as well as a natural generalization of the fact that a continuous map is nullhomotopic if and only if it factors through the cone. First, an easy characterization of a map in the contiguity class of a vertex.

\begin{lem}\label{scat 0}
Let $f\colon K\to L$. Then $\scat(f)=0 \Leftrightarrow f \sim *$.
\end{lem}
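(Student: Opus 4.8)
The plan is to unwind the definition of $\scat(f)$ directly, since this lemma is essentially the base case of the covering definition made explicit. The only point requiring attention is the indexing convention: $\scat(f)=n$ corresponds to a categorical cover by the $n+1$ subcomplexes $U_0,U_1,\ldots,U_n$, so that $\scat(f)=0$ means precisely that $K$ admits a categorical cover consisting of a \emph{single} subcomplex $U_0$. Both implications then hinge on the elementary observation that a subcomplex $U_0 \subseteq K$ which covers $K$ must equal $K$ itself.

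For the forward implication, I would assume $\scat(f)=0$, so that by definition there is a subcomplex $U_0 \subseteq K$ covering $K$ with $f|_{U_0}\sim *$. Since $U_0 \subseteq K$ and $U_0$ covers $K$, we have $U_0 = K$, and therefore $f = f|_{U_0} \sim *$.

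For the reverse implication, I would assume $f \sim *$ and simply exhibit the cover. Taking $U_0 := K$, the single subcomplex $U_0$ covers $K$ and satisfies $f|_{U_0} = f \sim *$, so $\{U_0\}$ is a categorical cover of $f$. This shows $\scat(f) \leq 0$, and since $\scat(f) \geq 0$ always, we conclude $\scat(f)=0$.

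No step here poses a genuine obstacle; the result is immediate once the definition is correctly parsed. The only thing worth guarding against is an off-by-one error in the cover size—treating a cover $U_0,\ldots,U_n$ as having $n$ rather than $n+1$ members—which is exactly what the identification $U_0 = K$ above sidesteps.
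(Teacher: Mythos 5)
Your proof is correct, and the forward direction is exactly the paper's argument: since $\scat(f)=0$ gives a single covering subcomplex $U_0\subseteq K$, necessarily $U_0=K$, so $f=f|_{U_0}\sim *$. The only divergence is in the backward direction: the paper simply cites Proposition \ref{well cont} (invariance of $\scat$ under contiguity class), implicitly using $f\sim *$ to get $\scat(f)=\scat(*)$ with $\scat(*)=0$ left unstated, whereas you argue directly by exhibiting the one-element categorical cover $U_0=K$ with $f|_{U_0}=f\sim *$. Your version is marginally more self-contained, since the paper's appeal to Proposition \ref{well cont} still tacitly rests on the same trivial observation that the constant map has category zero; either route is fine, and your attention to the off-by-one indexing convention ($n+1$ subcomplexes for $\scat = n$) is exactly the right point to be careful about.
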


\begin{proof}
Assume that $\scat(f)=0$. Then there exists only $U_0$ that covers $K$ i.e.  $U_0=K$. Hence $f=f|_K\sim *$. The backwards direction is Proposition \ref{well cont}.
\end{proof}

Recall that if $K$ is a simplicial complex, we define the \textbf{cone on $K$} by $CK:=\{\sigma, \sigma \cup \{v\} : \sigma \in K\}$ where $v$ is some vertex not in $K$. It is then easy to see that $CK$ has the strong homotopy type of a vertex, whence $\scat(CK)=0$.



\begin{lem}\label{Cone map}
Let $f\colon K\to L$. Then $f\sim * \Leftrightarrow f$ factors through $CK$.
\end{lem}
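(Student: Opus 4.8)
The plan is to prove the two implications separately, writing $j\colon K\hookrightarrow CK$ for the inclusion and reading ``$f$ factors through $CK$'' as the existence of a simplicial map $\tilde f\colon CK\to L$ with $\tilde f\circ j\sim f$, which is the appropriate sense in a setting where contiguity class plays the role of homotopy. The one standing fact I would record at the outset is the observation already noted before the lemma, that $CK$ has the strong homotopy type of a vertex and hence $\scat(CK)=0$; this is exactly where the cone enters.

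For the backward implication, suppose $\tilde f\circ j\sim f$ for some $\tilde f\colon CK\to L$. Then $\scat(\tilde f)\le\scat(CK)=0$ by Proposition \ref{map min}, so $\scat(\tilde f)=0$; Proposition \ref{comp prop} gives $\scat(\tilde f\circ j)\le\scat(\tilde f)=0$; and since $f\sim\tilde f\circ j$, Proposition \ref{well cont} yields $\scat(f)=0$. Lemma \ref{scat 0} then delivers $f\sim *$. This direction is short and purely formal once $\scat(CK)=0$ is in hand.

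The forward implication is the substantive one and is where I would spend the effort. Given $f\sim *$, fix a contiguity chain $f=f_0\sim_c f_1\sim_c\cdots\sim_c f_m=*$, where the final map is the constant map at some vertex $w\in L$. The heart of the argument is the observation that a map which is \emph{one step} contiguous to a constant extends over the cone: taking $g:=f_{m-1}$, so that $g\sim_c *$ and hence $g(\sigma)\cup\{w\}$ is a simplex of $L$ for every simplex $\sigma$ of $K$, I would define $\tilde g\colon CK\to L$ by $\tilde g|_K=g$ and $\tilde g(v)=w$. To check $\tilde g$ is simplicial, note that every simplex of $CK$ is either a simplex $\sigma\in K$, which maps to the simplex $g(\sigma)$, or of the form $\sigma\cup\{v\}$, which maps to $g(\sigma)\cup\{w\}$, a simplex precisely by the contiguity $g\sim_c *$. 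Since $\tilde g\circ j=g\sim f$, the map $\tilde g$ exhibits the desired factorization.

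The step I expect to be the main obstacle is recognizing that $f$ itself generally cannot be extended over $CK$ on the nose: being in the contiguity class of a vertex is strictly weaker than being one step contiguous to a constant (for instance, the identity on a collapsible path lies in the contiguity class of a vertex yet admits no simplicial extension to its cone, since the coned edges would force the apex into every edge of the path simultaneously). This is exactly why the factorization must be taken up to contiguity and why one passes along the chain to the penultimate map $f_{m-1}$ before coning. Granting this, the only genuine verification---that $\tilde g$ respects simplices---is immediate from the definition of contiguity, and the remainder is bookkeeping.
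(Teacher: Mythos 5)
Your backward implication is essentially the paper's own proof: the paper bounds $\scat(f)=\scat(h\circ i)\leq \scat(i)\leq\scat(CK)=0$ via Propositions \ref{comp prop} and \ref{map min} and concludes with Lemma \ref{scat 0}, exactly as you do except that you route the bound through the map out of the cone ($\scat(\tilde f)\leq\scat(CK)$) where the paper routes it through the map into it. The genuine divergence is the forward implication, which the paper dispatches with ``$f$ clearly factors through $CK$.'' Since the factorization is only required to commute up to contiguity class, the intended one-line argument is presumably the trivial one: take $\tilde f\colon CK\to L$ constant at a vertex $w$, so that $\tilde f\circ j=*\sim f$ by hypothesis---no chain analysis needed. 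Your construction, passing to the penultimate map $g=f_{m-1}$ of the chain and extending it strictly over the cone by sending the apex to $w$ (simpliciality being exactly the contiguity $g\sim_c *$), is more work but proves something strictly stronger: a map in the contiguity class of a vertex is contiguity-equivalent to a map that extends over the cone \emph{on the nose}, with the first map the actual inclusion. This is the faithful analogue of the classical fact that a nullhomotopic map extends over the cone, and it is arguably closer to what the pasting argument in Theorem \ref{scat lift} actually consumes, where the maps $\ell_j\colon U_j\to CU_j$ must agree on overlaps---the trivial constant factorization would be useless there. Two small blemishes, neither load-bearing: your parenthetical counterexample is stated too broadly, since the identity on a path with two edges \emph{does} extend over its cone (send the apex to the middle vertex); the obstruction you describe requires a path with at least three edges, where the coned edges force the apex image into every edge simultaneously. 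And the degenerate case $m=0$ (where $f=*$ and there is no penultimate map) should be noted, though it is handled by taking $g=*$ itself.
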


\begin{proof}
If $f\sim *$, then $f$ clearly factors through $CK$. Suppose that $f$ factors through $CK$ so that
$$
\xymatrix{
 & CK\ar[d]^{h}\\
K\ar[ur]^{i}\ar[r]_{f}& L\\
}
$$
commutes up to contiguity class. Observe that $h\circ i\sim f$ and according to Proposition \ref{comp prop}, $\scat(f)\leq \scat(i)$ while  $\scat(i)\leq \scat(CK)$ by Proposition \ref{map min}. But $\scat(CK)=0$ as noted above. Thus, $\scat(f)=0$ and by Lemma \ref{scat 0}, $f\sim *$.
\end{proof}

The following two lemmas are stated without proof.

\begin{lem}\label{Simplicial Pasting} (Simplicial Pasting Lemma)
Let $U,V\subseteq K$ with $f\colon U\to L$ and $g\colon V\to L$ be simplicial maps. Suppose that $f(v)=g(v)$ whenever $v\in U\cap V$. Then $f\cup g\colon U \cup V \to L$ defined by 
$$
f\cup g :=
\begin{cases}
f(u) & \text{if } u\in U \\
g(u) & \text{if } u\in V
\end{cases}
$$
is a well-defined simplicial map.
\end{lem}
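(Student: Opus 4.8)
The plan is to check two things about $f\cup g$: that it is a well-defined function on the vertex set of $U\cup V$, and that it sends simplices to simplices. Since $f\cup g$ is prescribed piecewise, the only possible ambiguity is on the overlap $U\cap V$, where both defining clauses apply. I would dispose of this first: for a vertex $v\in U\cap V$ the hypothesis $f(v)=g(v)$ forces the two clauses to agree, so $f\cup g$ assigns a single vertex of $L$ to each vertex of $U\cup V$ and is thus well-defined.

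For the simplicial property I would exploit the combinatorial description of the union. Because $U$ and $V$ are subcomplexes of $K$, the simplices of $U\cup V$ are precisely the simplices lying in $U$ together with those lying in $V$; in particular there are no ``mixed'' simplices straddling $U\setminus V$ and $V\setminus U$, since such a simplex would contain a vertex absent from $U$ and a vertex absent from $V$ and hence lie in neither subcomplex. Given any simplex $\sigma$ of $U\cup V$, then, either $\sigma\in U$, whence $(f\cup g)(\sigma)=f(\sigma)$ is a simplex of $L$ because $f$ is simplicial, or $\sigma\in V$, whence $(f\cup g)(\sigma)=g(\sigma)$ is a simplex of $L$ because $g$ is simplicial. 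In either case $(f\cup g)(\sigma)$ is a simplex of $L$, which is exactly the required condition.

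The single substantive point is the observation that every simplex of $U\cup V$ sits entirely in $U$ or entirely in $V$; this is precisely what makes the simplicial pasting lemma so much cleaner than its topological analogue, where one must impose closedness or openness conditions to glue continuous maps. I expect this to be the main---indeed essentially the only---obstacle, and even it amounts to recording the correct definition of the union of subcomplexes rather than to any real difficulty. Everything else reduces to evaluating $f\cup g$ on a given simplex and noting that it coincides with $f$ or with $g$ there.
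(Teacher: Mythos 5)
Your proof is correct; note that the paper deliberately states this lemma without proof, and your argument is exactly the standard one that the authors omit. The key observation---that a simplex of $U\cup V$ lies entirely in $U$ or entirely in $V$, since a ``mixed'' simplex would have a vertex outside each subcomplex and so belong to neither---together with agreement of $f$ and $g$ on $U\cap V$ for well-definedness, is all that is needed.
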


\begin{lem}\label{cont union}
Let $f_1,f_2\colon U\to L$ and $g_1,g_2\colon V\to L$ satisfy the conditions of the Simplicial Pasting Lemma so that $f_1\cup g_1$ and $f_2\cup g_2$ are well-defined. If $f_1\sim f_2$ and $g_1\sim g_2$, then $f_1\cup g_1\sim f_2\cup g_2$.
\end{lem}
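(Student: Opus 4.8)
The plan is to reduce the statement to a single simultaneous contiguity step and then chain these together. The key structural observation is that, since $U$ and $V$ are subcomplexes, every simplex of $U\cup V$ belongs to $U$ or to $V$; no simplex of $U\cup V$ genuinely straddles the two pieces. Consequently, to test contiguity of two maps $U\cup V\to L$ it suffices to test it separately on $U$ and on $V$. I would record this as the engine of the proof.

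First I would prove the following single-step version: if $f_1\sim_c f_2$ and $g_1\sim_c g_2$, and both $f_1\cup g_1$ and $f_2\cup g_2$ are defined (that is, $f_i=g_i$ on $U\cap V$, as in Lemma \ref{Simplicial Pasting}), then $f_1\cup g_1\sim_c f_2\cup g_2$. The verification is immediate from the observation above: given a simplex $\sigma$ of $U\cup V$, either $\sigma\in U$, in which case $(f_1\cup g_1)(\sigma)\cup(f_2\cup g_2)(\sigma)=f_1(\sigma)\cup f_2(\sigma)$ is a simplex because $f_1\sim_c f_2$, or $\sigma\in V$, in which case the same union equals $g_1(\sigma)\cup g_2(\sigma)$, a simplex because $g_1\sim_c g_2$. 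Thus $f_1\cup g_1\sim_c f_2\cup g_2$.

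To deduce the lemma I would iterate the single-step version along the given contiguity sequences $f_1=h_0\sim_c\cdots\sim_c h_p=f_2$ and $g_1=k_0\sim_c\cdots\sim_c k_q=g_2$, after padding the shorter one with repetitions so that $p=q$. The difficulty, and the step I expect to be the main obstacle, is that the single-step version only applies to pairs $(h_i,k_i)$ for which the paste $h_i\cup k_i$ is actually defined, i.e. for which $h_i=k_i$ on $U\cap V$. This holds at the two ends but may fail in between, since the two homotopies trace possibly different paths on the overlap $U\cap V$. To overcome this I would construct a synchronized simultaneous sequence $F_0,F_1,\ldots,F_N\colon U\cup V\to L$ with $F_0=f_1\cup g_1$ and $F_N=f_2\cup g_2$, whose restriction to $U$ realizes a contiguity sequence from $f_1$ to $f_2$ and whose restriction to $V$ realizes one from $g_1$ to $g_2$, arranged so that the two restrictions agree on $U\cap V$ at every stage; each $F_j$ is then a legitimate map by the Simplicial Pasting Lemma, and $F_j\sim_c F_{j+1}$ by the single-step version, yielding $F_0\sim F_N$. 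Producing such a sequence amounts to extending the overlap trace of each homotopy across the remainder of the other complex, and the endpoint compatibility $f_i=g_i$ on $U\cap V$ supplied by the hypotheses is what makes this possible; this extension and synchronization across $U\cap V$ is the delicate point, whereas everything else is formal.
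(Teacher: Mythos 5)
Your single-step version is correct, and its proof is exactly right: every simplex of $U\cup V$ lies in $U$ or in $V$, so contiguity of pasted maps can be tested piecewise, and $f_1\cup g_1\sim_c f_2\cup g_2$ whenever $f_1\sim_c f_2$, $g_1\sim_c g_2$ and both pastes are defined. You have also put your finger on the genuine obstacle: along arbitrary contiguity sequences $f_1=h_0\sim_c\cdots\sim_c h_p=f_2$ and $g_1=k_0\sim_c\cdots\sim_c k_p=g_2$ the intermediate maps need not agree on $U\cap V$, so the intermediate pastes $h_i\cup k_i$ need not be defined. The gap is the final step: the ``synchronized'' sequence $F_0,\ldots,F_N$ is never constructed, and your claim that endpoint compatibility makes such a construction possible is not correct --- no construction can exist in general, because the lemma as stated is false. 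Take $U\cup V=L=C_6$, the hexagon with vertices $0,\ldots,5$ and edges $\{i,i+1\}$ with indices read modulo $6$; let $U$ be the arc on $0,1,2,3$ and $V$ the arc on $3,4,5,0$, so $U\cap V=\{0,3\}$; let $f_1,g_1$ be the inclusions and $f_2=g_2=*$ the constant map at the vertex $0$. A simplicial map from a path folds to a constant map one vertex at a time (redefine the terminal vertex to map to the image of its neighbor; each such move is a contiguity), and constant maps to a connected complex form a single contiguity class, so $f_1\sim f_2$ and $g_1\sim g_2$, and both pastes are defined. But $f_1\cup g_1=\id_{C_6}$ while $f_2\cup g_2=*$, and $\id_{C_6}\not\sim *$: if $\id_{C_6}\sim_c g$, then for every edge $e$ of $C_6$ the set $e\cup g(e)$ must be a simplex, hence $g(e)\subseteq e$ since $C_6$ has no $2$-simplices, and applying this to the two edges incident to $i$ gives $g(i)\in\{i-1,i\}\cap\{i,i+1\}=\{i\}$; thus the contiguity class of $\id_{C_6}$ is $\{\id_{C_6}\}$. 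This is the simplicial shadow of the classical fact that nullhomotopies on two arcs covering a circle, even if they agree at the endpoints of the arcs at time $0$ and time $1$, do not glue to a nullhomotopy of the identity.

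There is no proof in the paper to compare against --- Lemma \ref{cont union} is one of the two lemmas stated without proof --- and your analysis in fact shows more than you intended: the ``delicate point'' you flagged is not merely delicate but insurmountable, since in the example above all the hypotheses (endpoint compatibility included) hold while the conclusion fails. What your single-step argument does prove, by padding the sequences to equal length and inducting, is the corrected statement: if the contiguity sequences from $f_1$ to $f_2$ and from $g_1$ to $g_2$ can be chosen to agree on $U\cap V$ at \emph{every} stage, then each paste $h_i\cup k_i$ is defined by Lemma \ref{Simplicial Pasting} and $f_1\cup g_1\sim f_2\cup g_2$. That stagewise-compatibility hypothesis, not mere compatibility of the endpoint maps, is what any application of the lemma --- such as the factorization argument in Theorem \ref{scat lift}, where the pasted contiguities on the pieces $U_j$ are produced independently of one another --- would actually need to verify.
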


We now come to one of the main results of this Section.  It will be utilized several times in Section \ref{Essential simplicial category}.

\begin{thm}\label{scat lift}
Let $f\colon K\to L$. Then $\scat(f)\leq n \Leftrightarrow$ there exists a simplicial complex $K'$ with $\gscat(K')\leq n$ such that 
$$
\xymatrix{
 & K'\ar[d]\\
K\ar[ur]\ar[r]_{f}& L\\
}
$$
commutes up to contiguity class.
\end{thm}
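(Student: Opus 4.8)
The plan is to prove the two implications separately, with the reverse implication a short formal argument and the forward implication requiring an explicit construction of $K'$.

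For the implication ($\Leftarrow$), suppose the diagram commutes up to contiguity class, say $K \xrightarrow{\alpha} K' \xrightarrow{\beta} L$ with $\beta \circ \alpha \sim f$ and $\gscat(K') \le n$. Then I would simply chain together results already established: $\scat(f) = \scat(\beta\circ\alpha)$ by Proposition \ref{well cont}, $\scat(\beta\circ\alpha) \le \scat(\alpha)$ by Proposition \ref{comp prop}, $\scat(\alpha) \le \scat(K')$ by Proposition \ref{map min}, and $\scat(K') \le \gscat(K')\le n$ by Proposition \ref{scat less gscat}. This mirrors the argument used in Lemma \ref{Cone map} and needs nothing new.

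For the implication ($\Rightarrow$), fix a categorical cover $U_0,\dots,U_n$ of $f$, so that each $f|_{U_j}\sim *$. The guiding idea is the one behind Lemma \ref{Cone map}: a map that is null up to contiguity class ought to factor through something conical. I would therefore build $K'$ by attaching to $K$, along each $U_j$, a strong-homotopy-trivial subcomplex $W_j$ (with $U_j\subseteq W_j$) carrying a simplicial map $\Phi_j\colon W_j\to L$ that restricts to $f|_{U_j}$ on $U_j$. Granting such $W_j$, the collection $\{W_j\}$ covers $K'$ (the $U_j$ already cover $K$, and the remaining cells of $K'$ lie in the $W_j$), and each $W_j$ has the strong homotopy type of a vertex, so $\gscat(K')\le n$. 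Taking $\alpha\colon K\to K'$ to be the inclusion and defining $\beta\colon K'\to L$ by $\beta|_K=f$ and $\beta|_{W_j}=\Phi_j$, the Simplicial Pasting Lemma (Lemma \ref{Simplicial Pasting}) assembles $\beta$ into a well-defined simplicial map, since the pieces agree on their overlaps (which lie in $K$, where everything equals $f$); then $\beta\circ\alpha=f$, yielding the factorization.

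The main obstacle is the construction of the $W_j$, and this is where the subtlety of working up to contiguity \emph{class}, rather than a single contiguity, enters. One cannot simply take $W_j=CU_j$ and extend $f$ over the cone: extending a map over $CU_j$ forces its restriction to be contiguous to a constant in one step, that is $f|_{U_j}\sim_c *$, whereas the hypothesis only supplies a chain $f|_{U_j}=g_0\sim_c g_1\sim_c\cdots\sim_c g_{m}=*$. To absorb this chain I would let $W_j$ be a simplicial mapping telescope built from the successive contiguities: between consecutive copies of $U_j$ insert the cylinder realizing $g_i\sim_c g_{i+1}$, and cone off the final copy. The maps $g_0,\dots,g_m$ then glue, via the Pasting Lemma together with Lemma \ref{cont union}, to a single $\Phi_j$ with $\Phi_j|_{U_j}=g_0=f|_{U_j}$, while collapsing the telescope shows that $W_j$ has the strong homotopy type of a vertex. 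Verifying that the simplicial cylinder of a contiguity is well defined and strong-homotopy-trivial, and that these cylinders paste correctly, is the technical heart of the argument; once it is in place, the remainder is bookkeeping with the pasting lemmas.
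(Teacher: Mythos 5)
Your reverse implication is exactly the paper's: the same chain through Propositions \ref{well cont}, \ref{comp prop}, \ref{map min}, and \ref{scat less gscat}, so nothing to flag there. In the forward direction, however, you have quietly strengthened the statement, and that is the source of all your difficulty. The theorem only asks that the triangle commute \emph{up to contiguity class}, whereas your architecture ($\alpha$ an inclusion, $\beta|_K = f$, hence $\beta\circ\alpha = f$ on the nose) demands a strict factorization. Your observation that strictly extending $f|_{U_j}$ over $CU_j$ would force $f|_{U_j}\sim_c *$ in a single step is correct, but it only bites under that self-imposed strictness. The paper uses precisely the cone construction you ruled out: it sets $K' := CU_0\cup CU_1\cup\cdots\cup CU_n$, which visibly satisfies $\gscat(K')\leq n$, invokes Lemma \ref{Cone map} to produce $\ell_j\colon U_j\to CU_j$ and $g_j\colon CU_j\to L$ with $f|_{U_j}\sim g_j\circ\ell_j$ --- note $g_j$ need not extend $f|_{U_j}$ at all (it can be taken essentially constant, since only contiguity-class commutativity is required), so no one-step contiguity ever arises --- and then pastes via Lemmas \ref{Simplicial Pasting} and \ref{cont union} to conclude $f\sim\left(\bigcup g_j\right)\circ\left(\bigcup \ell_j\right)$. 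That is the entire forward direction, using only lemmas already on the table.

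As for your mapping telescope: the route is plausibly completable (a cylinder for a single contiguity can be modelled on the categorical product $U_j\times\Delta^1$, and the telescope should strong collapse from the cone end down), and if carried out it would even deliver the stronger strict commutativity. But as written there is a genuine gap exactly where you locate ``the technical heart'': the existence of the simplicial cylinder, the strong homotopy triviality of the telescope, and the compatibility of the pasted cylinder maps are asserted rather than proved, and none of these is available among the paper's stated lemmas. So either supply that machinery in full, or --- better --- notice that the ``up to contiguity class'' clause in the statement is what makes Lemma \ref{Cone map} sufficient, collapsing your construction to the paper's.
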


\begin{proof}
First suppose that $\scat(f)\leq n$. Then there exists $U_0, U_1,\ldots, U_n\subseteq K$ such that $f|_{U_j}\sim *$. Define $K':= CU_0 \cup CU_1 \cup \ldots \cup CU_n$, where $CU_j$ is the cone on $U_j$. Clearly $\gscat(K')\leq n$. By Lemma \ref{Cone map}, there are simplicial maps $\ell_j\colon U_j\to CU_j$ and $g_j\colon CU_j\to L$ such that 

$$
\xymatrix{
&CU_j\ar[d]^{g_j}\\
U_j\ar[ur]^{\ell_j}\ar[r]_{f|_{U_j}}& L
}
$$

\noindent commutes up to contiguity class i.e. $f|_{U_j}\sim g_j\circ \ell_j$ for all $0\leq j\leq n.$ Combining all these maps together via Lemma \ref{cont union}, we see that $\left(\bigcup g_j\right)\circ \left(\bigcup \ell_j\right) = \bigcup\left(g_j\circ \ell_j\right) \sim \bigcup f|_{U_j}=f$ so that $f$ factors through $K'$.\\

Now suppose that there is a $K'$ with $\gscat(K') \leq n$ and that
$$
\xymatrix{
&K'\ar[d]^{g}\\
K\ar[ur]^{\ell}\ar[r]_{f}& L
}
$$
commutes up to contiguity class. The intention is to show that $\scat(f)\leq n$. Observe that 
\begin{alignat*}{2}
\scat(f) &=\scat(g\circ \ell) && \qquad \text{by Proposition \ref{well cont}}\\
& \leq \scat(g) && \qquad \text{by Proposition \ref{comp prop}}\\
& \leq \scat(K') && \qquad \text{by Proposition \ref{map min}}\\
& \leq\gscat(K') && \qquad \text{by Proposition \ref{scat less gscat}}\\
& \leq n && \qquad \text{by hypothesis.}
\end{alignat*}
Thus $\scat(f)\leq n.$
\end{proof}

\subsection{Products of maps}\label{Products of maps}

Let $f\colon K\to M$ and $g \colon L \to N$ be simplicial maps.  In this Section, we study the simplicial category of the map $f\times g \colon K\times L \to M \times N$ on the categorical product satisfying the universal property of products (\cite[Definition 4.25]{K-08}).  The simplices of $K\times L$ are defined formally as $(\sigma, \tau)$ for each $\sigma\in K$ and $\tau \in L$ with the relation $(\sigma_1, \tau_1)\subseteq (\sigma_2, \tau_2)$ if and only if $\sigma_1 \subseteq \sigma_2$ and $\tau_1 \subseteq \tau_2$ for $\sigma_1, \sigma_2\in K$ and $\tau_1, \tau_2\in L$.  Then $(f\times g)(\sigma,\tau ):=(f(\sigma), g(\tau))$. \\ 


We first compute the simplicial category of some basic maps involving products. 

\begin{defn}
The \textbf{(simplicial) diagonal map on $K$}, denoted $\Delta\colon K\rightarrow K\times K$, is defined by $\Delta(\sigma):= (\sigma,\sigma)$ for all $\sigma \in K$. The \textbf{(simplicial) projection onto the first coordinate} $p_1\colon K\times L \rightarrow K$ is defined by $p_1(\sigma, \tau):= \sigma$ for all $(\sigma, \tau)\in K\times L$.  The simplicial map $p_2\colon K\times L \to L$ is defined similarly. 
\end{defn}

\begin{prop}\label{diag equal}
If $\Delta \colon K\rightarrow K\times K$ is the diagonal map, then $\scat(\Delta)=\scat(K)$.
\end{prop}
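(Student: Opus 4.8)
The plan is to prove the claimed equality by establishing the two inequalities $\scat(\Delta)\leq\scat(K)$ and $\scat(K)\leq\scat(\Delta)$ separately, each of which falls out of the propositions already proved in this subsection. Neither direction requires constructing covers by hand; the whole argument reduces to applying the composition and minimum bounds together with a single elementary observation about the projection maps.

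For the upper bound $\scat(\Delta)\leq\scat(K)$, I would simply invoke Proposition \ref{map min} directly for the map $\Delta\colon K\to K\times K$. This gives $\scat(\Delta)\leq\min\{\scat(K),\scat(K\times K)\}$, and since the minimum is at most $\scat(K)$, we are done with this direction immediately. No special structure of $\Delta$ is needed here beyond its being a simplicial map into $K\times K$.

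For the lower bound $\scat(K)\leq\scat(\Delta)$, the key observation is that post-composing the diagonal with the first projection recovers the identity: since $p_1(\Delta(\sigma))=p_1(\sigma,\sigma)=\sigma$ for every $\sigma\in K$, we have $p_1\circ\Delta=\id_K$ as simplicial maps. I would verify this identity explicitly (it is the only genuine computation in the proof, and it is routine). Then Proposition \ref{id same} and Proposition \ref{comp prop} combine to give $\scat(K)=\scat(\id_K)=\scat(p_1\circ\Delta)\leq\min\{\scat(p_1),\scat(\Delta)\}\leq\scat(\Delta)$, which is exactly the desired inequality. Putting the two bounds together yields $\scat(\Delta)=\scat(K)$.

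The main point to get right is simply the correct direction of each inequality and the correct role of each proposition: the minimum bound of Proposition \ref{map min} supplies the upper bound, while the factorization $p_1\circ\Delta=\id_K$ combined with the composition bound of Proposition \ref{comp prop} supplies the lower bound. I do not anticipate a real obstacle, since the diagonal is a section of the projection and the invariant behaves well under composition; the symmetric choice of $p_2$ in place of $p_1$ would work equally well.
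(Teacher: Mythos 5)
Your proof is correct and is essentially identical to the paper's own argument: the authors also obtain $\scat(\Delta)\leq\scat(K)$ directly from Proposition \ref{map min}, and the lower bound from the factorization $p_1\circ\Delta=\id_K$ together with Propositions \ref{id same} and \ref{comp prop}. No gaps; your explicit verification of $p_1\circ\Delta=\id_K$ is a minor elaboration the paper leaves implicit.
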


\begin{proof}
According to Proposition \ref{map min}, $\scat(\Delta)\leq \min\{\scat(K),\scat(K\times K)\}$ and therefore $\scat(\Delta)\leq\scat(K)$. On the other hand, observe that
\begin{alignat*}{2}
\scat(K)&=\scat(\id_k) && \qquad \text{by Proposition \ref{id same}}\\
& =\scat(p_1\circ \Delta) && \qquad \text{}\\
& \leq \scat(\Delta) && \qquad \text{by Proposition \ref{comp prop}}\\
\end{alignat*} 
and hence the result.
\end{proof}

\begin{prop}
If $p_1\colon K\times L \rightarrow K$, then $\scat(K)=\scat(p_1)$ (similarly for $p_2$).
\end{prop}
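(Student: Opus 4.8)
The plan is to establish the two inequalities $\scat(p_1) \leq \scat(K)$ and $\scat(K) \leq \scat(p_1)$ separately, following the same template as the proof of Proposition \ref{diag equal} for the diagonal map.

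The first inequality is immediate from the general bound already in hand: since $p_1 \colon K \times L \to K$, Proposition \ref{map min} gives $\scat(p_1) \leq \min\{\scat(K\times L), \scat(K)\} \leq \scat(K)$.

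For the reverse inequality, the idea is to exhibit a section of $p_1$ up to the identity, so that $\id_K$ factors through $p_1$. Concretely, I would fix a vertex $v_0 \in L$ (which exists since $L$ is nonempty and connected) and let $c \colon K \to L$ be the constant map at $v_0$. Using the universal property of the categorical product $K \times L$ (\cite[Definition 4.25]{K-08}), the pair of maps $\id_K \colon K \to K$ and $c \colon K \to L$ determines a unique simplicial map $s := \langle \id_K, c\rangle \colon K \to K \times L$ satisfying $p_1 \circ s = \id_K$ and $p_2 \circ s = c$. The key point is that this factorization is an honest equality $p_1 \circ s = \id_K$, not merely one up to contiguity class. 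Then I would compute
$$\scat(K) = \scat(\id_K) = \scat(p_1 \circ s) \leq \scat(p_1)$$
using Proposition \ref{id same} for the first equality and Proposition \ref{comp prop} for the final inequality. Combining the two bounds yields $\scat(K) = \scat(p_1)$. The argument for $p_2$ is identical, fixing instead a vertex of $K$ and forming $\langle c', \id_L\rangle \colon L \to K \times L$.

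The main obstacle, such as it is, lies in correctly invoking the universal property to produce the section $s$ as a genuine simplicial map and in confirming that $p_1 \circ s$ equals $\id_K$ exactly. One could alternatively write $s$ down explicitly on vertices by $s(w) = (w, v_0)$ and verify directly, via the given description of the simplices of $K \times L$, that $s$ carries each simplex of $K$ to a simplex of the product; but appealing to the universal property bypasses this verification and makes the identity $p_1 \circ s = \id_K$ transparent, which is exactly what feeds Propositions \ref{id same} and \ref{comp prop}.
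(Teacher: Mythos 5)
Your proof is correct and is essentially the paper's own argument: the paper also gets $\scat(p_1)\leq \scat(K)$ from Proposition \ref{map min} and the reverse inequality by factoring $\id_K$ strictly through $p_1$ via an inclusion $K\hookrightarrow K\times L$, which is exactly your section $s(w)=(w,v_0)$. Your extra care in constructing $s$ via the universal property (or explicitly on vertices) just makes precise what the paper's diagram leaves implicit.
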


\begin{proof}
Use the same estimates as Proposition \ref{diag equal} along with the strictly commutative diagram
$$
\xymatrix{
K\ar@{^{(}->}[r]\ar[d]_{\id_K} & K\times L\ar[dl]^{p_1}\\
K.
}
$$

\end{proof}

The main result of this Section requires the following Theorem. 

\begin{thm}\cite[Theorem 5.14]{F-M-M-V}\label{complex product}
For simplicial complexes $K$ and $L$, we have $\scat(K\times L)\leq (\scat(K)+1)(\scat(L)+1)-1.$
\end{thm}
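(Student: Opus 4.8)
The plan is to run the standard Lusternik--Schnirelmann product argument: begin from categorical covers of the two factors, take all pairwise products, and show that each product of categorical subcomplexes is again categorical in $K\times L$. Write $n=\scat(K)$ and $m=\scat(L)$ and fix categorical covers $U_0,\ldots,U_n$ of $K$ and $V_0,\ldots,V_m$ of $L$, so that the inclusions satisfy $i_{U_i}\sim *$ and $i_{V_j}\sim *$ for all $i,j$. First I would check that the $(n+1)(m+1)$ subcomplexes $U_i\times V_j$ cover $K\times L$: any simplex $(\sigma,\tau)$ has $\sigma\in U_i$ for some $i$ and $\tau\in V_j$ for some $j$, whence $(\sigma,\tau)\in U_i\times V_j$. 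Thus, once each $U_i\times V_j$ is shown to be categorical, the definition of $\scat$ gives $\scat(K\times L)\le (n+1)(m+1)-1=(\scat(K)+1)(\scat(L)+1)-1$.

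The crux is therefore to show that $U_i\times V_j$ is categorical in $K\times L$, i.e. that the inclusion $\iota\colon U_i\times V_j\hookrightarrow K\times L$ lies in the contiguity class of a vertex. Post-composing $\iota$ with the projection $K\times L\to K$ produces the map $(\sigma,\tau)\mapsto\sigma$, which factors as $i_{U_i}\circ q_1$, where $q_1\colon U_i\times V_j\to U_i$ is the projection. By Proposition \ref{comp prop}, $\scat(i_{U_i}\circ q_1)\le\scat(i_{U_i})$, and since $U_i$ is categorical we have $i_{U_i}\sim *$, so $\scat(i_{U_i})=0$ by Lemma \ref{scat 0}. Hence $\scat(i_{U_i}\circ q_1)=0$, and a second use of Lemma \ref{scat 0} gives $i_{U_i}\circ q_1\sim *$; the identical argument applied to the second projection gives $i_{V_j}\circ q_2\sim *$. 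Thus both coordinate projections of $\iota$ are in the contiguity class of a vertex.

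The main obstacle is the passage from ``both projections of $\iota$ are null'' to ``$\iota$ is null,'' which is exactly where one must engage with the categorical product of \cite[Definition 4.25]{K-08}. The key fact I would isolate as a lemma is that contiguity in a product is detected componentwise: for simplicial maps $F,G\colon A\to M\times N$ one has $F\sim_c G$ if and only if $p_1\circ F\sim_c p_1\circ G$ and $p_2\circ F\sim_c p_2\circ G$, since a simplex of $M\times N$ containing both $F(x)=(\alpha_1,\beta_1)$ and $G(x)=(\alpha_2,\beta_2)$ exists precisely when $\alpha_1\cup\alpha_2\in M$ and $\beta_1\cup\beta_2\in N$. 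Granting this, a contiguity sequence realizing $i_{U_i}\circ q_1\sim *$ and one realizing $i_{V_j}\circ q_2\sim *$ can be spliced---first sliding the first coordinate to the constant map with the second held fixed, then the second---into a single contiguity sequence from $\iota$ to $*$, so that $U_i\times V_j$ is categorical. Verifying this componentwise description carefully, rather than the routine covering and counting steps, is where the real work lies; once it is in hand, $\{U_i\times V_j\}_{i,j}$ is a categorical cover of $K\times L$ of cardinality $(\scat(K)+1)(\scat(L)+1)$, which yields the stated inequality.
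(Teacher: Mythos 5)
The paper itself contains no proof of this statement---it is imported verbatim from \cite[Theorem 5.14]{F-M-M-V}---so the only comparison available is with that source, whose argument is the same standard one you give: cover $K\times L$ by the $(\scat(K)+1)(\scat(L)+1)$ products $U_i\times V_j$ and show each is categorical. Your proof is correct, and correctly identifies the one point of substance: the componentwise detection of contiguity holds precisely because $K\times L$ is the \emph{categorical} product (a set of vertex pairs is a simplex iff both projections are simplices, so $F(\rho)\cup G(\rho)$ projects to $p_1F(\rho)\cup p_1G(\rho)$ and $p_2F(\rho)\cup p_2G(\rho)$), after which splicing the two contiguity sequences one coordinate at a time, and noting that $\iota$ agrees with the pairing of its components by the universal property, legitimately yields $\iota\sim *$.
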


\begin{thm}
Let $f\colon K\to M$ and  $g\colon L\to N$ be simplicial maps. Then 
$$\scat(f\times g)\leq(\scat(f)+1)(\scat(g)+1)-1.$$
\end{thm}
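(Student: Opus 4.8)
The plan is to reduce the statement to its already-established complex-level analogue, Theorem \ref{complex product}, by means of the factorization characterization of Theorem \ref{scat lift}. Writing $\scat(f)=m$ and $\scat(g)=n$, Theorem \ref{scat lift} produces simplicial complexes $K'$ and $L'$ with $\gscat(K')\le m$ and $\gscat(L')\le n$, together with factorizations $f\sim h\circ\ell$ and $g\sim h'\circ\ell'$, where $\ell\colon K\to K'$, $h\colon K'\to M$, $\ell'\colon L\to L'$, and $h'\colon L'\to N$.

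First I would record the elementary functoriality of the product of maps: for composable simplicial maps one has the strict equality $(h\times h')\circ(\ell\times\ell')=(h\circ\ell)\times(h'\circ\ell')$, which is immediate from evaluating $(f\times g)(\sigma,\tau)=(f(\sigma),g(\tau))$ coordinatewise. Consequently $f\times g$ factors, up to contiguity class, through $K'\times L'$ via $\ell\times\ell'\colon K\times L\to K'\times L'$ and $h\times h'\colon K'\times L'\to M\times N$.

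The estimate would then run as a single chain of inequalities: by Proposition \ref{well cont} we have $\scat(f\times g)=\scat\big((h\times h')\circ(\ell\times\ell')\big)$; by Proposition \ref{comp prop} this is at most $\scat(h\times h')$; by Proposition \ref{map min} it is at most $\scat(K'\times L')$; by Theorem \ref{complex product} it is at most $(\scat(K')+1)(\scat(L')+1)-1$; and finally, using Proposition \ref{scat less gscat} together with $\gscat(K')\le m$ and $\gscat(L')\le n$, it is at most $(m+1)(n+1)-1=(\scat(f)+1)(\scat(g)+1)-1$.

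The main obstacle is the one step I glossed over, namely that $f\sim h\circ\ell$ and $g\sim h'\circ\ell'$ imply $f\times g\sim(h\circ\ell)\times(h'\circ\ell')$; that is, that forming products respects contiguity classes. The hard part will be verifying this at the level of a single contiguity step in the product complex, where one checks that if $f_0(\sigma)\cup f_1(\sigma)$ is a simplex of $M$ and $g_0(\tau)\cup g_1(\tau)$ is a simplex of $N$, then the corresponding pair is a simplex of $M\times N$; altering one factor at a time then links the full contiguity classes. This is the product analogue of Lemma \ref{cont union}, and I expect it either to be quotable from \cite{F-M-M-V} or to warrant a short preliminary lemma. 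As an alternative route that avoids Theorem \ref{scat lift}, one may instead take categorical covers $\{U_i\}_{i=0}^m$ of $f$ and $\{V_j\}_{j=0}^n$ of $g$, note that the $(m+1)(n+1)$ subcomplexes $U_i\times V_j$ cover $K\times L$, and invoke $(f\times g)|_{U_i\times V_j}=(f|_{U_i})\times(g|_{V_j})\sim *\times *\sim *$; this rests on the very same compatibility of products with contiguity.
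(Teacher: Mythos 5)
Your proposal matches the paper's proof essentially step for step: factor $f$ and $g$ through complexes $K'$ and $L'$ via Theorem \ref{scat lift}, observe that $f\times g$ then factors through $K'\times L'$ up to contiguity class, and run the identical chain of inequalities through Propositions \ref{well cont}, \ref{comp prop}, \ref{map min}, Theorem \ref{complex product}, and Proposition \ref{scat less gscat}. The one step you flag as needing justification --- that forming products respects contiguity classes --- is in fact asserted without proof in the paper as well, so your treatment is, if anything, slightly more careful.
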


\begin{proof}
Write $\scat(f)=n$ and $\scat(g)=m$. By Theorem \ref{scat lift}, there exists $K', L'$ with $\gscat(K')\leq n$ and $\gscat(L')\leq m$ along with lifts $\phi_1, \phi_2$ making the following diagrams commute up to contiguity class:
$$
\xymatrix{
&K'\ar[d]^{\psi_1}&&&L'\ar[d]^{\psi_2}\\
K\ar[ur]^{\phi_1}\ar[r]_{f}& M&&L\ar[ur]^{\phi_2}\ar[r]_{g}& M.
}
$$
Since $f\sim \psi_1\circ \phi_1$ and $g\sim \psi_2\circ \phi_2$,  we have $f\times g\sim (\psi_1 \times \psi_2)\circ (\phi_1\times \phi_2)$.

$$
\xymatrix{
K\times L\ar[d]_{\phi_1\times \phi_2}\ar[drr]^{f\times g}\\
K'\times L\ar[rr]_{\psi_1\times\psi_2}&& M\times N
}
$$

\noindent Hence 

\begin{alignat*}{2}
\scat(f\times g) &=\scat((\psi_1\times\psi_2)\circ(\phi_1\times\phi_2)) && \qquad \text{by Proposition \ref{well cont}}\\
& \leq \scat(\phi_1\times \phi_2) && \qquad \text{by Proposition \ref{comp prop}}\\
& \leq \scat(K'\times L') && \qquad \text{by Proposition \ref{map min}}\\
& \leq (\scat(K')+1)(\scat(L')+1)-1 && \qquad \text{by Theorem \ref{complex product}}\\
& \leq (\gscat(K')+1)(\gscat(L')+1)-1 && \qquad \text{by Proposition \ref{scat less gscat}}\\
& \leq (n+1)(m+1)-1 && \qquad \text{}\\
& = (\scat(f)+1)(\scat(g)+1)-1. && \qquad \text{}\\
\end{alignat*}
\end{proof}

\subsection{Fibrations}\label{Fibrations}

Fibrations for simplicial complexes were recently defined by Fern{\'a}ndez-Ternero et al. \cite{F-M-M-V} and used to prove a result relating the simplicial categories of the complexes in a fibration (see Corollary \ref{original fibration}).  In this Section, we generalize this result by showing it can easily be deduced from a fact about the simplicial category of maps.  We first state the definition of a fibration in our setting.   

\begin{defn}
A simplicial map $E\xrightarrow{p} B$ between path connected simplicial complexes is a \textbf{Hurewicz simplicial fibration}, or simply a fibration, if for any $f,g\colon K\to B$ with $f\sim g$ and for any $\widehat{f}\colon K\to E$ with $p\circ \widehat{f}=f$, there exists a lift $\widehat{g}\colon K\to E$ such that $\widehat{f}\sim \widehat{g}$ and $p\circ \widehat{g}=g$.
$$
\xymatrix{
&& E\ar[dd]^{p}\\
&\\
K\ar@/^1pc/[uurr]^{\widehat{f}}\ar@{-->}[uurr]_{\widehat{g}}\ar[rr]^{f}\ar@/_/[rr]_{g}&& B
}
$$

\noindent For any $b\in B$, the simplicial complex $F:=p^{-1}(\{b\})$ is the \textbf{fiber} of $p$.
\end{defn}

It has also been shown by the same authors mentioned above that the fiber is well-defined up to strong homotopy type \cite[Theorem 5.2.6]{F-M-M-V}.  We now prove a result relating the simplicial categories of maps in a fibration to the simplicial category of the base space.  The analogous result in the classical case is originally due to Hardie \cite{H-70}.

\begin{thm}\label{map fibration}
Let $F\xrightarrow{i}E\xrightarrow{p}B$ be a fibration. Then $$\scat(E)\leq(\scat(i)+1)(\scat(p)+1)-1.$$
\end{thm}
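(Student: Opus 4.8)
The plan is to construct an explicit categorical cover of $E$ of size $(\scat(i)+1)(\scat(p)+1)$, combining a cover coming from $\scat(p)$ with one coming from $\scat(i)$ much as in the product bound, but using the homotopy lifting property of the fibration to play the role that a base decomposition plays classically. Write $\scat(p)=n$ and $\scat(i)=m$, and fix a categorical cover $V_0,\ldots,V_n$ of $E$ with $p|_{V_j}\sim *$ for each $j$.

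The first step is to deform each $V_j$ into a fiber. Since $p|_{V_j}\sim *$, it lies in the contiguity class of a constant map $c_{b_j}$ at some vertex $b_j\in B$. Applying the definition of fibration with $\widehat f$ taken to be the inclusion $\iota_{V_j}\colon V_j\hookrightarrow E$, and noting that $p\circ\iota_{V_j}=p|_{V_j}\sim c_{b_j}$, I obtain a map $\widehat g\colon V_j\to E$ with $\iota_{V_j}\sim\widehat g$ and $p\circ\widehat g=c_{b_j}$. The equality $p\circ\widehat g=c_{b_j}$ forces $\widehat g$ to take values in the fiber $F_j:=p^{-1}(b_j)$, so $\widehat g=i_j\circ s_j$ for the inclusion $i_j\colon F_j\hookrightarrow E$ and some $s_j\colon V_j\to F_j$; thus $\iota_{V_j}\sim i_j\circ s_j$.

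The second step pulls back a fiber cover. Given a categorical cover $W_0^j,\ldots,W_m^j$ of $F_j$ witnessing $\scat(i_j)=m$ (so $i_j|_{W_k^j}\sim *$), set $V_j^k:=s_j^{-1}(W_k^j)$. Each $V_j^k$ is a subcomplex, the $m+1$ of them cover $V_j$, and $s_j$ restricts to a map $V_j^k\to W_k^j$. Restricting the contiguity $\iota_{V_j}\sim i_j\circ s_j$ to $V_j^k$ and using that precomposition preserves the contiguity class, I get $\iota_{V_j}|_{V_j^k}\sim (i_j|_{W_k^j})\circ(s_j|_{V_j^k})\sim *$, so each $V_j^k$ is categorical in $E$. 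Letting $j$ and $k$ range produces $(n+1)(m+1)$ subcomplexes covering $E$, each categorical in $E$, whence $\scat(E)=\scat(\id_E)\leq(n+1)(m+1)-1$ by Proposition \ref{id same}.

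The main obstacle is justifying that $\scat(i_j)=\scat(i)=m$ for every fiber $F_j$, since the cover used in the second step was asserted for the possibly nonstandard fiber $F_j$ rather than for the distinguished one. I would derive this from \cite[Theorem 5.2.6]{F-M-M-V} together with the fibration property itself: for an edge $\{b,b'\}$ of $B$ the constant maps $c_b,c_{b'}\colon F_b\to B$ are contiguous, so lifting the inclusion $i_b$ yields a transport $t\colon F_b\to F_{b'}$ with $i_b\sim i_{b'}\circ t$, and $t$ is a strong homotopy equivalence. Chaining along an edge-path in the connected complex $B$ produces a strong homotopy equivalence $h\colon F\to F_j$ with $i\sim i_j\circ h$; writing $h'$ for a homotopy inverse, Propositions \ref{comp prop} and \ref{well cont} give $\scat(i)=\scat(i_j\circ h)\leq\scat(i_j)$ and, since $i_j\sim i_j\circ h\circ h'\sim i\circ h'$, also $\scat(i_j)=\scat(i\circ h')\leq\scat(i)$, forcing equality. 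Verifying carefully that the lift is compatible with the inclusions and is genuinely a strong homotopy equivalence is the delicate point that this argument rests on.
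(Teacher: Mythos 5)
Your overall strategy is the same as the paper's: use the lifting property of the fibration to deform the inclusions of the $\scat(p)$-cover of $E$ into a fiber, then refine each piece by pulling back a categorical cover of the fiber along the deformed map, producing $(\scat(i)+1)(\scat(p)+1)$ subcomplexes that are categorical in $E$. The difference is that you let each lift land in its own fiber $F_j=p^{-1}(b_j)$, which forces you to compare $\scat(i_j)$ with $\scat(i)$, and that comparison is where your argument has a genuine gap. You need a strong homotopy equivalence $h\colon F\to F_j$ satisfying the compatibility $i\sim i_j\circ h$, which is strictly more than \cite[Theorem 5.2.6]{F-M-M-V} gives you (that theorem only asserts the fibers have the same strong homotopy type, with no statement about compatibility with the inclusions into $E$). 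Your single-edge transport does produce a map $t\colon F_b\to F_{b'}$ with $i_b\sim i_{b'}\circ t$, but showing $t$ is a strong homotopy equivalence does not follow formally from the lifting property as you have set it up: composing transports in both directions only yields $i_b\circ t'\circ t\sim i_b$, and the inclusion $i_b$ need not be cancellable, so you cannot conclude $t'\circ t\sim \id_{F_b}$. You flag this yourself as unverified, and as written it is the step on which the proof founders.

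The gap is entirely avoidable, in two ways. First, the paper's (implicit) route: $B$ is edge-path connected by the standing assumption (indeed the definition of fibration requires it), so any two constant maps into $B$ lie in the same contiguity class --- for an edge $\{b,b'\}\in B$ one has $c_b(\sigma)\cup c_{b'}(\sigma)=\{b,b'\}$, a simplex, and one chains along an edge path. Hence you may take every $b_j$ to be the distinguished vertex $b$, every lift lands in the single fiber $F$, and the comparison of fibers never arises; this is exactly what the paper does. Second, even with varying fibers you never need an equivalence: applying the fibration property to $c_{b_j}\sim c_b$ with $\widehat{f}=i_j$ (note $p\circ i_j=c_{b_j}$) yields $t'\colon F_j\to F$ with $i_j\sim i\circ t'$, whence $\scat(i_j)=\scat(i\circ t')\leq \scat(i)=m$ by Propositions \ref{well cont} and \ref{comp prop} --- and an inequality $\scat(i_j)\leq m$ is all your second step actually uses, since you only need a cover of $F_j$ by $m+1$ subcomplexes on which $i_j$ is in the contiguity class of a vertex. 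With either repair, the remainder of your argument (pulling back the fiber cover along $s_j$, and the fact that restriction and pre- and post-composition preserve contiguity classes) is correct and coincides with the paper's proof.
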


\begin{proof}
Let $\scat(i)=n$ with cover $V_0,V_1,\ldots,V_n\subseteq F$ and $i|_{V_j}\sim *$, and $\scat(p)=m$ with cover $U_0,U_1,\ldots,U_m\subseteq E$ and $p|_{U_j}\sim *$.  Denoting by $\widehat{p}_j\colon U_j \to E$ the inclusion, we have the following diagram:

$$
\xymatrix{
&& F \ar[d]^i \\
&& E\ar[dd]^{p}\\
&\\
U_j\ar@/^1pc/[uurr]^{\widehat{p_j}}\ar@{-->}[uurr]_{\widehat{g_j}}\ar[rr]^{p|_{U_j}}\ar@/_/[rr]_{*}&& B.
}
$$

Since $p|_{U_j}\sim *$ and  $p\circ \widehat{p}_j=p|_{U_j}$, the fact that $p$ is a fibration implies that there exists a lift $\widehat{g}_j\colon U_j \to E$ such that $\widehat{g}_j\sim \widehat{p}_j$ and $p\circ \widehat{g}_j=*$.  This latter equation implies that $p(\widehat{g}_j(U_j))=*$ so $\widehat{g}_j(U_j)\subseteq F$.  Define $W_{ij}:=\widehat{g}^{-1}(V_j)\subseteq U_j$.  Then $\bigcup W_{ij}=E$ since $\bigcup \widehat{g}^{-1}(V_i)$ covers $U_j$ for fixed $j$ and all the $U_j$ cover $E$.  It remains to show that $r_{W_{ij}}\sim *$ where $r_{W_{ij}}\colon W_{ij}\to E$ is the inclusion. Let $\widehat{g}_{ij}$ denote the inclusion $\widehat{g}_j$ restricted to $W_{ij}$ into $V_j$. Then the diagram
$$
\xymatrix{
W_{ij}\ar[d]_{\widehat{g}_{ij}} \ar[rr]^{r_{W_{ij}}}&& E\\
V_j \ar[rr]^{i|_{V_j}} && F\ar[u]^i 
}
$$
\noindent commutes up to contiguity class since $\widehat{g}_j\sim \widehat{p}_j$ with $\widehat{p}_j$ the inclusion. But $i|_{V_j}\sim *$ by hypothesis, whence each $W_{ij}$ is categorical in $E$, and the result follows.  
\end{proof}

As a Corollary, we immediately obtain the result of Fern{\'a}ndez-Ternero et al. relating the $\scat$ of the fiber, base space, and total space.

\begin{cor}\label{original fibration}\cite[Theorem 5.2.7]{F-M-M-V} Let $F\xrightarrow{i}E\xrightarrow{p}B$ be a fibration. Then $\scat(E)\leq(\scat(F)+1)(\scat(B)+1)-1$.
\end{cor}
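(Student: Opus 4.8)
The plan is to deduce this directly from Theorem \ref{map fibration} by bounding the categories of the two maps $i$ and $p$ in terms of the categories of the complexes $F$ and $B$. Theorem \ref{map fibration} already gives the estimate $\scat(E) \leq (\scat(i)+1)(\scat(p)+1)-1$ for any fibration $F\xrightarrow{i}E\xrightarrow{p}B$, so the entire task reduces to comparing $\scat(i)$ with $\scat(F)$ and $\scat(p)$ with $\scat(B)$.

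First I would apply Proposition \ref{map min} to each of the two maps in the fibration. Since $i\colon F\to E$, the proposition yields $\scat(i)\leq \min\{\scat(F),\scat(E)\}\leq \scat(F)$; since $p\colon E\to B$, it yields $\scat(p)\leq \min\{\scat(E),\scat(B)\}\leq \scat(B)$. These are the only two ingredients needed beyond the main theorem.

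The final step is to substitute these bounds into the conclusion of Theorem \ref{map fibration}. Because the expression $(x+1)(y+1)-1$ is monotonically nondecreasing in each of the nonnegative integer variables $x$ and $y$, replacing $\scat(i)$ by the larger quantity $\scat(F)$ and $\scat(p)$ by the larger quantity $\scat(B)$ can only increase the right-hand side. Concretely, I would chain the inequalities
$$
\scat(E)\leq(\scat(i)+1)(\scat(p)+1)-1\leq(\scat(F)+1)(\scat(B)+1)-1,
$$
where the first inequality is Theorem \ref{map fibration} and the second is the monotone substitution just described.

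There is essentially no serious obstacle here: the corollary is a formal consequence of the theorem together with Proposition \ref{map min}, and the result of Fern\'andez-Ternero et al. drops out precisely because the map category is dominated by the category of either complex it connects. The only point worth stating explicitly is the monotonicity of $(x+1)(y+1)-1$, which holds trivially for nonnegative integers and requires no computation.
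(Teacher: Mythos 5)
Your proof is correct and follows essentially the same route as the paper: apply Theorem \ref{map fibration} and then bound $\scat(i)\leq\scat(F)$ and $\scat(p)\leq\scat(B)$ using the monotonicity of $(x+1)(y+1)-1$. In fact your citation of Proposition \ref{map min} for those two bounds is the right one, whereas the paper's proof cites Proposition \ref{comp prop}, which appears to be a mis-reference to the same fact.
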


\begin{proof} By Proposition \ref{comp prop}, $\scat(i)\leq \scat(F)$ and $\scat(p)\leq \scat(B)$.  Combining this with Theorem \ref{map fibration}, we see that $\scat(E)\leq (\scat(i)+1)(\scat(p)+1)-1\leq(\scat(F)+1)(\scat(B)+1)-1$. 
\end{proof}






\section{Applications}\label{Applications}

\subsection{Relationship with finite topological spaces}\label{Relationship with finite topological spaces}

We briefly look at the relationship between the simplicial category of a map and the classical category of the corresponding continuous map between finite $T_0$-spaces and vice versa. Beginning with the work of Stong \cite{Stong-66}, finite spaces have recently enjoyed a growing interest. Finite $T_0$-spaces are in bijective correspondence with posets by declaring that $x\leq y$ if and only if $U_x \subseteq U_y$ where $U_x$ is the intersection of all open sets containing $x$.  Conversely, given a poset $(P,\leq)$, we define a basis for a topology on $P$ by $U_x:=\{y\in P: y\leq x\}$.  Furthermore, there is a functor $\chi$ from simplicial complexes to posets (finite $T_0$-spaces) as well as a functor $\mathcal{K}$ from posets to simplicial complexes. 

Let $f\colon K\to L$ be a simplicial map.  The corresponding poset  $\chi(K)$ has vertex set consisting of all the simplices of $K$ while two vertices $\sigma, \tau$ satisfy  $\sigma \leq \tau$ if and only if $\sigma \subseteq \tau$.  In addition, we obtain a continuous function $\chi(f)\colon \chi(K)\to \chi(L)$ defined by $\chi(f)(\sigma):=f(\sigma)$.  Furthermore, $\chi$ takes simplicial maps in the same contiguity class to homotopic maps.

\begin{prop}\label{homotopic maps}\cite[Proposition 2.1.3]{BThesis} Let $f,g\colon K\to L$ be simplicial maps with $f\sim g$.  Then $\chi(f)\simeq \chi(g)$.
\end{prop}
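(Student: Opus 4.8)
\textbf{Proof proposal for Proposition \ref{homotopic maps}.}

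The plan is to reduce the statement to the single-step case and then exhibit an explicit homotopy in the poset (finite $T_0$-space) setting. First I would observe that if $f\sim g$, then by definition there is a chain $f=f_0\sim_c f_1\sim_c\cdots\sim_c f_m=g$ of contiguous maps, and since $\simeq$ is an equivalence relation on continuous maps, it suffices to prove the claim under the assumption that $f\sim_c g$ are contiguous. So the whole problem collapses to: \emph{if $f\sim_c g$, then $\chi(f)\simeq\chi(g)$.}

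Next I would recall the key fact about homotopy in finite $T_0$-spaces that makes this tractable: two continuous maps $\alpha,\beta\colon X\to Y$ between finite $T_0$-spaces are homotopic whenever they are comparable, meaning $\alpha(x)\leq\beta(x)$ for every $x$ (or $\beta(x)\leq\alpha(x)$ for every $x$), since a pointwise comparison gives a homotopy through the order-induced path. More generally, $\alpha\simeq\beta$ whenever they can be joined by a finite fence of such pointwise comparisons. The strategy is therefore to build, from the contiguity condition, a continuous map that dominates (or is dominated by) both $\chi(f)$ and $\chi(g)$ in the poset $\chi(L)$, thereby linking them by a fence. Given a simplex $\sigma\in K$, contiguity guarantees that $f(\sigma)\cup g(\sigma)$ is a simplex of $L$; this union is exactly the join I want, since in the poset $\chi(L)$ it is an upper bound for both $\chi(f)(\sigma)=f(\sigma)$ and $\chi(g)(\sigma)=g(\sigma)$ under the inclusion order $\subseteq$.

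The main construction, then, is to define an auxiliary map $h\colon\chi(K)\to\chi(L)$ by $h(\sigma):=f(\sigma)\cup g(\sigma)$ and to verify that $h$ is order-preserving (hence continuous): if $\sigma\subseteq\tau$ in $K$ then $f(\sigma)\subseteq f(\tau)$ and $g(\sigma)\subseteq g(\tau)$ since $f,g$ are simplicial, so $f(\sigma)\cup g(\sigma)\subseteq f(\tau)\cup g(\tau)$, i.e. $h(\sigma)\leq h(\tau)$. By construction $\chi(f)(\sigma)\leq h(\sigma)\geq\chi(g)(\sigma)$ for every vertex $\sigma$ of $\chi(K)$, so $\chi(f)$ and $h$ are comparable and $h$ and $\chi(g)$ are comparable, giving $\chi(f)\simeq h\simeq\chi(g)$ and hence $\chi(f)\simeq\chi(g)$.

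The main obstacle I expect is a foundational rather than computational one: I must be certain that the ``pointwise comparability implies homotopic'' principle for maps of finite $T_0$-spaces is available and stated cleanly, and that $h$ genuinely lands in $\chi(L)$ (which is precisely where the contiguity hypothesis is used, via $f(\sigma)\cup g(\sigma)\in L$). I would also double-check that $\chi(f)$ is itself well-defined and continuous, i.e. order-preserving, which again follows from $f$ being simplicial. Everything else is routine verification; the conceptual content lies entirely in recognizing the simplex $f(\sigma)\cup g(\sigma)$ as a common upper bound that furnishes the homotopy.
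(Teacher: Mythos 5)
Your proof is correct, and since the paper itself offers no proof of this proposition (it is quoted from Barmak's thesis \cite{BThesis}), the right comparison is with the source: your argument is essentially Barmak's. Reducing to a single contiguity $f\sim_c g$, using that $f(\sigma)\cup g(\sigma)$ is a simplex to define the order-preserving map $\sigma\mapsto f(\sigma)\cup g(\sigma)$ as a common upper bound of $\chi(f)$ and $\chi(g)$, and invoking the standard fact that pointwise-comparable continuous maps between finite $T_0$-spaces are homotopic (Stong/Barmak) is exactly the standard route, and every step you flag as needing care (continuity of $h$, the comparability-implies-homotopy principle) checks out.
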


The following Proposition is the analogue for maps found in \cite[Proposition 6.4]{F-M-V}.

\begin{prop}\label{category relations} 
Let $f\colon K\to L$ be a simplicial map. Then $\cat((\chi(f))\leq \scat(f)$.
\end{prop}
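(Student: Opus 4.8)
The plan is to pull a categorical cover of $f$ directly through the functor $\chi$ and check that its image is a categorical open cover of $\chi(f)$ in the classical sense. Concretely, suppose $\scat(f)=n$ and fix subcomplexes $U_0,U_1,\ldots,U_n$ covering $K$ with $f|_{U_j}\sim *$. I would set $A_j:=\chi(U_j)\subseteq\chi(K)$ for each $j$ and aim to show that $A_0,\ldots,A_n$ is an open cover of the finite space $\chi(K)$ on each member of which $\chi(f)$ is nullhomotopic, which would yield $\cat(\chi(f))\le n=\scat(f)$.

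There are two structural points to verify, and they are where the argument really lives. First, each $A_j=\chi(U_j)$ must be \emph{open} in $\chi(K)$. Recall that in $\chi(K)$ the minimal open set of a simplex $\sigma$ is $U_\sigma=\{\tau:\tau\subseteq\sigma\}$, so the open sets are exactly the down-sets (subsets closed under passing to faces). Since $U_j$ is a subcomplex, it is by definition closed under taking faces, so $\chi(U_j)$ is a down-set and hence open; I expect this identification of ``subcomplex'' with ``open down-set'' to be the main obstacle, in the sense that it is the one place where the hypothesis that the $U_j$ are subcomplexes (rather than arbitrary vertex subsets) is essential. Second, the $A_j$ cover $\chi(K)$: every point of $\chi(K)$ is a simplex $\sigma\in K$, and since the $U_j$ cover $K$ we have $\sigma\in U_j$ for some $j$, whence $\sigma\in\chi(U_j)=A_j$.

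It then remains to see that $\chi(f)$ restricted to each $A_j$ is nullhomotopic. Here I would first observe the compatibility $\chi(f)|_{\chi(U_j)}=\chi(f|_{U_j})$, which holds because both maps send a simplex $\sigma\in U_j$ to $f(\sigma)$. Since $f|_{U_j}\sim *$, Proposition \ref{homotopic maps} gives $\chi(f|_{U_j})\simeq\chi(*)$, and $\chi(*)$ is the constant map at the image vertex; thus $\chi(f)|_{A_j}$ is homotopic to a constant map, i.e.\ nullhomotopic. Combining the three facts, $A_0,\ldots,A_n$ is a categorical open cover of $\chi(f)$ of size $n+1$, so $\cat(\chi(f))\le n=\scat(f)$, as desired.
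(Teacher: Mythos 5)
Your proof is correct and takes essentially the same route as the paper's own: push a categorical cover $U_0,\ldots,U_n$ of $f$ through $\chi$, use Proposition \ref{homotopic maps} together with the identity $\chi(f)|_{\chi(U_j)}=\chi(f|_{U_j})$, and conclude $\cat(\chi(f))\leq n$. The only difference is that you explicitly verify that each $\chi(U_j)$ is open in $\chi(K)$ (subcomplexes are precisely down-sets, hence open in the Alexandrov topology), a point the paper's proof leaves implicit---a welcome bit of added rigor, not a different argument.
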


\begin{proof} 
Let $\scat(f)=n$. Then there exists $U_0,U_1,\ldots, U_n\subseteq K$ covering $K$ such that $f|_{U_j}\sim *$. By Proposition \ref{homotopic maps}, $\chi(f|_{U_j})\sim \chi(*)=*$. Observe that $\chi (f|_{U_j})(\sigma)=(f|_{U_j})(\sigma)=\chi(f)|_{\chi(U_j)}(\sigma)$ whence $\chi (U_0),\chi (U_1),\ldots,\chi(U_n)$ provides a categorical cover of $\chi(f)$ so that $\cat(\chi(f))\leq\scat(f)$. 
\end{proof}



We now investigate the other direction.  Recall that if $X$ is a finite $T_0$-space, we may construct a corresponding simplicial complex, the \textbf{order poset} denoted $\mathcal{K}(X)$, by defining the vertex set of $\mathcal{K}(X)$ to be the points of $X$ while simplices are given by the non-empty chains in the order of $X$ or, equivalently, the open sets of $X$. Furthermore, if $f\colon X \to Y$ is continuous, the induced map on the order complexes $\mathcal{K}(f)\colon \mathcal{K}(X)\to \mathcal{K}(Y)$ is given by $\mathcal{K}(f)(x):=f(x)$. It can also be shown that if $f,g\colon X \to Y$ are continuous maps between finite spaces with $f\simeq g$, then $\mathcal{K}(f)\sim \mathcal{K}(g)$ \cite[Proposition 2.12]{BThesis}.
%

\begin{prop} Let $f\colon X\to Y$ be a continuous map between finite topological spaces.  Then $\scat(\mathcal{K}(f))\leq \cat(f)$.
\end{prop}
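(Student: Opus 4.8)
The plan is to mirror the proof of Proposition \ref{category relations}, running the functor $\mathcal{K}$ in place of $\chi$, but with one genuinely new ingredient needed to pass from an open cover of a space to a cover of its order complex by subcomplexes. Writing $\cat(f)=n$, I would start from an open cover $U_0, U_1, \ldots, U_n$ of $X$ for which each restriction $f|_{U_j}\colon U_j\to Y$ is nullhomotopic, i.e. $f|_{U_j}\simeq *$. Each $U_j$, with the subspace topology, is again a finite $T_0$-space, and its order complex $\mathcal{K}(U_j)$ is naturally the full subcomplex of $\mathcal{K}(X)$ spanned by the points of $U_j$: a chain in $U_j$ is precisely a chain of $X$ all of whose elements lie in $U_j$.

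The first thing to establish is that these subcomplexes actually cover $\mathcal{K}(X)$, and this is the step where openness is essential (a mere cover of the point set is not enough, since an edge of $\mathcal{K}(X)$ could straddle two of the $U_j$). Recall from the correspondence described above that the open sets of a finite $T_0$-space are exactly the down-sets, so each $U_j$ is down-closed. Given any simplex $\sigma=\{x_0<x_1<\cdots<x_k\}$ of $\mathcal{K}(X)$, choose $j$ with $x_k\in U_j$; since $U_j$ is down-closed and $x_i\le x_k$ for all $i$, the entire chain $\sigma$ lies in $U_j$, so $\sigma\in\mathcal{K}(U_j)$. Hence $\bigcup_j \mathcal{K}(U_j)=\mathcal{K}(X)$.

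It then remains to verify that each $\mathcal{K}(U_j)$ is categorical for $\mathcal{K}(f)$. By functoriality and the formula $\mathcal{K}(f)(x)=f(x)$ on vertices, the restriction $\mathcal{K}(f)|_{\mathcal{K}(U_j)}$ agrees with $\mathcal{K}(f|_{U_j})$. Since $f|_{U_j}\simeq *$ and $\mathcal{K}$ carries homotopic maps of finite spaces into the same contiguity class \cite[Proposition 2.12]{BThesis}, we obtain $\mathcal{K}(f)|_{\mathcal{K}(U_j)}=\mathcal{K}(f|_{U_j})\sim \mathcal{K}(*)=*$. Thus $\mathcal{K}(U_0),\ldots,\mathcal{K}(U_n)$ is a categorical cover of $\mathcal{K}(f)$, giving $\scat(\mathcal{K}(f))\le n=\cat(f)$.

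I expect the only real obstacle to be the covering step: matching the openness hypothesis built into the definition of $\cat(f)$ against the combinatorial requirement that the subcomplexes $\mathcal{K}(U_j)$ exhaust every simplex of $\mathcal{K}(X)$. Everything else is bookkeeping with the functor $\mathcal{K}$ together with the already-cited homotopy-to-contiguity translation.
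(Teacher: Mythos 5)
Your proof is correct and takes essentially the same approach as the paper: restrict $f$ to the open sets of a categorical cover, pass through $\mathcal{K}$, and use the homotopy-to-contiguity result \cite[Proposition 2.12]{BThesis} to conclude each $\mathcal{K}(U_j)$ is categorical for $\mathcal{K}(f)$. The one difference is that you explicitly verify the covering step---using that open sets are down-sets under the paper's convention $U_x=\{y: y\leq x\}$, so any chain whose maximal element lies in $U_j$ lies entirely in $\mathcal{K}(U_j)$---a point the paper's proof leaves implicit, and your justification of it is correct.
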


\begin{proof} Let $\cat(f)=n$. Then there are open sets $U_0, U_1,\ldots, U_n \subseteq X$ that cover $X$ such that $f|_{U_j}\simeq *$. Then, as mentioned above, $\mathcal{K}(f|_{U_j})\sim \mathcal{K}(*)$. Since $\mathcal{K}(f|_{U_j})(x)=(f|_{U_j})(x)=\mathcal{K}(f)|_{\mathcal{K}(U_j)}(x)$ and clearly $\mathcal{K}(*)=*$, $\mathcal{K}(U_j)$ forms a categorical cover of $\mathcal{K}(f)$, and the result follows. 
\end{proof}

\subsection{Essential simplicial category}\label{Essential simplicial category}

The essential category weight of a continuous map was defined by Strom \cite{Strom97, Strom98} as a means to study the classical category of a space.  In this Section, we carry over some of the basic results into the simplicial setting.  

\begin{defn}
A simplicial map $f\colon K\to L$ has \textbf{essential simplicial category at least $n$}, denoted $\Es(f)\geq n$, if for every $M$ with $\scat(M)\leq n$ and simplicial map $g\colon M\to K$, we have $f\circ g\sim *$.
\end{defn}

The essential simplicial category is an invariant of the contiguity class of map chosen and furthermore, bounds strictly from below the simplicial category.

\begin{prop}
If $f_0\sim f_1$, then $\Es(f_0)=\Es(f_1).$ Furthermore, if $f\not \sim *$, then $\Es(f)<\scat(K)$.
\end{prop}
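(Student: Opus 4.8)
The plan is to prove the two claims separately. The first claim---that $\Es$ is an invariant of contiguity class---should be a direct unwinding of the definition. Suppose $f_0 \sim f_1$ and $\Es(f_0) \geq n$. To show $\Es(f_1) \geq n$, I would take any $M$ with $\scat(M) \leq n$ and any simplicial map $g \colon M \to K$, and try to show $f_1 \circ g \sim *$. Since $f_0 \sim f_1$, I expect $f_0 \circ g \sim f_1 \circ g$; this is a compatibility of composition with contiguity class that is implicitly used throughout the paper (for instance in the proof of Proposition~\ref{comp prop}, where $g \circ (f|_{U_j}) \sim g \circ *$). Granting that, $f_1 \circ g \sim f_0 \circ g \sim *$ because $\Es(f_0) \geq n$ gives $f_0 \circ g \sim *$. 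Hence $\Es(f_1) \geq n$, and by symmetry the two essential categories agree.

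For the second claim---that $f \not\sim *$ forces $\Es(f) < \scat(K)$---I would argue by contradiction. Write $\scat(K) = N$ and suppose toward a contradiction that $\Es(f) \geq N$. The natural choice is to take $M = K$ and $g = \id_K$, so that $\scat(M) = \scat(K) = N$ and the defining condition of $\Es(f) \geq N$ applies to this pair. That condition yields $f \circ \id_K \sim *$, i.e.\ $f \sim *$, contradicting the hypothesis $f \not\sim *$. Therefore $\Es(f) \leq N - 1 < N = \scat(K)$, which is exactly the strict inequality claimed. Here I am using that $\Es(f) \geq n$ is a downward-closed condition in $n$ (if it holds at the top value $N$ it holds for the specific witness $M = K$), and that $\scat(\id_K) = \scat(K)$ by Proposition~\ref{id same}.

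The main obstacle, such as it is, is the lemma that composition respects contiguity class: if $h_0 \sim h_1 \colon K \to L$ and $g \colon M \to K$, then $h_0 \circ g \sim h_1 \circ g$, and symmetrically on the other side. This is used in both halves of the argument but is never isolated as a numbered statement in the excerpt. I would either cite its repeated informal use in Section~\ref{Basic properties} or, to be safe, prove the one-step case: if $h_0 \sim_c h_1$ then for any simplex $\tau$ of $M$, the set $(h_0 \circ g)(\tau) \cup (h_1 \circ g)(\tau) = h_0(g(\tau)) \cup h_1(g(\tau))$ is a simplex of $L$ because $g(\tau)$ is a simplex of $K$ and $h_0 \sim_c h_1$; hence $h_0 \circ g \sim_c h_1 \circ g$, and the general contiguity-class statement follows by induction along the defining sequence. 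Everything else is bookkeeping, so I expect the proof to be quite short once this compatibility is in hand.
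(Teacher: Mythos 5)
Your proof is correct and follows essentially the same route as the paper: the first claim by applying the definition of $\Es$ to an arbitrary $g\colon M\to K$ together with the fact that composition respects contiguity classes, and the second by contradiction with the witness $M=K$, $g=\id_K$. The only difference is that you explicitly prove the one-step compatibility $h_0\sim_c h_1 \Rightarrow h_0\circ g\sim_c h_1\circ g$, which the paper simply asserts---a reasonable bit of added rigor, not a different argument.
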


\begin{proof}
Write $\Es(f_1)=n$. To show that $\Es(f_0)\geq n$, let $M$ by any simplicial complex with $\scat(M)\leq n$ and $g\colon M\to K$ be any simplicial map.
Since $f_0\sim f_1$, $f_0\circ g\sim f_1\circ g$ and because $\Es(f_1)=n$, $f_1\circ g\sim *$. Hence $\Es(f_0)\geq n$. The other direction is identical.

\bigskip
Now suppose that $f\not \sim *$, and suppose by contradiction that $\Es(f)\geq \scat(K)=n$. Choosing $M=K$ and $g=\id_K$ as above, we see that $f\sim *$ by definition, a contradiction.
\end{proof}

Using our characterization of the simplicial category of a map above (Theorem \ref{scat lift}), we may in turn give alternative characterizations of the essential simplicial category. 

\begin{prop}
Let $f\colon K\to L$. The following are equivalent:
\begin{enumerate}
\item[$1)$] $\Es(f)\geq n$.\\
\item[$2)$] $f\circ g\sim *$ for all  $g\colon M\to K$ with $\scat(g)\leq n$.\\
\item[$3)$] $f\circ g\sim *$ for all  $g\colon M\to K$ with $\gscat(M)\leq n$.
\end{enumerate}
\end{prop}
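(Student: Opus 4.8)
The plan is to prove the chain of equivalences $1) \Rightarrow 2) \Rightarrow 3) \Rightarrow 1)$, exploiting the factorization characterization of Theorem \ref{scat lift} as the bridge between the $\scat$-condition in $2)$ and the $\gscat$-condition in $3)$. The definition of $\Es(f) \geq n$ already phrases the hypothesis in terms of complexes $M$ with $\scat(M) \leq n$, so the task is to show that replacing ``$\scat(M) \leq n$ on the domain'' by ``$\scat(g) \leq n$ on the map'' and then by ``$\gscat(M) \leq n$ on the domain'' yields logically equivalent statements.

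For $1) \Rightarrow 2)$, I would take any $g\colon M \to K$ with $\scat(g) \leq n$ and apply Theorem \ref{scat lift}: there is a complex $M'$ with $\gscat(M') \leq n$ through which $g$ factors, say $g \sim \psi \circ \phi$ with $\phi\colon M \to M'$ and $\psi\colon M' \to K$. Now $M'$ satisfies $\scat(M') \leq \gscat(M') \leq n$ by Proposition \ref{scat less gscat}, so the hypothesis $\Es(f) \geq n$ applies to the map $\psi\colon M' \to K$, giving $f \circ \psi \sim *$. Then $f \circ g \sim f \circ \psi \circ \phi \sim * \circ \phi \sim *$, using Proposition \ref{well cont} to preserve contiguity classes under composition. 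The implication $3) \Rightarrow 1)$ is the cheapest direction: given $\Es(f)$ in the sense of $3)$ and any $M$ with $\scat(M) \leq n$ together with $g\colon M \to K$, I note that the identity $\id_M$ has $\scat(\id_M) = \scat(M) \leq n$ by Proposition \ref{id same}, so one can realize $M$ as the domain of a map of small $\gscat$ — more directly, one passes through Theorem \ref{scat lift} applied to $\id_M$, or simply observes that the definition of $\Es(f)\geq n$ is recovered as the special case of $3)$ where $g$ itself witnesses a small-category domain.

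The implication $2) \Rightarrow 3)$ requires moving from the map-level hypothesis to the domain-level hypothesis. Here I would take $g\colon M \to K$ with $\gscat(M) \leq n$ and observe that $\scat(g) \leq \scat(M) \leq \gscat(M) \leq n$ by Proposition \ref{map min} and Proposition \ref{scat less gscat}; the hypothesis $2)$ then immediately yields $f \circ g \sim *$. This direction is essentially a monotonicity observation chaining the established inequalities $\scat(g) \leq \scat(M) \leq \gscat(M)$.

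The main obstacle, such as it is, lies in $1) \Rightarrow 2)$, since this is the only step that genuinely uses the factorization theorem rather than a direct inequality between invariants: one must manufacture the intermediate complex $M'$ and verify that it has small $\scat$ so that the original definition of $\Es(f)$ can be invoked on the lift $\psi$. The subtlety to handle carefully is that $\Es(f) \geq n$ is defined using complexes $M$ with small \emph{simplicial} category, whereas the factorization produces a complex with small \emph{geometric} category; the reconciliation is exactly Proposition \ref{scat less gscat}, and I would make sure to flag that $\gscat(M') \leq n$ implies $\scat(M') \leq n$ before applying the hypothesis. Everything else reduces to repeated application of Proposition \ref{well cont} to propagate contiguity through compositions.
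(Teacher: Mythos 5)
Your proposal is correct in its main line and follows essentially the same route as the paper: the cycle $1)\Rightarrow 2)\Rightarrow 3)\Rightarrow 1)$ with Theorem \ref{scat lift} as the bridge between the $\scat$- and $\gscat$-formulations. Your $1)\Rightarrow 2)$ is exactly the paper's argument (factor $g\sim\psi\circ\phi$ through $M'$ with $\gscat(M')\leq n$, note $\scat(M')\leq\gscat(M')\leq n$, apply $\Es(f)\geq n$ to $\psi$, and compose), and in fact you state it more carefully than the paper does. For $2)\Rightarrow 3)$ you chain $\scat(g)\leq\scat(M)\leq\gscat(M)\leq n$ via Propositions \ref{map min} and \ref{scat less gscat}, where the paper instead invokes Theorem \ref{scat lift} on the diagram with $\id_M$ as the lift; these are equivalent, and yours is the more elementary phrasing. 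The one point needing correction is in $3)\Rightarrow 1)$: your parenthetical alternative --- that the definition of $\Es(f)\geq n$ ``is recovered as the special case of $3)$ where $g$ itself witnesses a small-category domain'' --- is false as stated, because the inequality runs $\scat\leq\gscat$, so a domain with $\scat(M)\leq n$ need \emph{not} satisfy $\gscat(M)\leq n$, and $1)$ is not literally a special case of $3)$. This direction genuinely requires the factorization: since $\scat(g)\leq\scat(M)\leq n$ by Proposition \ref{map min}, Theorem \ref{scat lift} factors $g$ through some $M'$ with $\gscat(M')\leq n$, one applies $3)$ to the resulting map $M'\to K$, and composes --- which is the paper's argument and coincides with the first route you sketch (applying Theorem \ref{scat lift} to $\id_M$ works equally well). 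Commit to that route and delete the ``special case'' remark, and the proof is complete.
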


\begin{proof}
Suppose that $\Es(f)\geq n$ and let $g\colon M\to K$ with $\scat(g)\leq n$. Then by Theorem \ref{scat lift}, there is an $M'$ with $\scat(M')\leq \gscat(M')\leq n$ and lift making the following diagram commute up to contiguity class:
$$
\xymatrix{
& M'\ar[d]^{h}\\
M\ar[r]_{g} \ar[ur]^{g'}& K\ar[r]_{f}& L.
}
$$
Since $\Es(f)\geq n$, $(h\circ g')\circ f\sim *$ so $f\circ g\sim (h\circ g')\circ f\sim *$.\\

Now assume 2) and let $g\colon M\to K$ with $\gscat(M)\leq n$. Applying Theorem \ref{scat lift} to the diagram 
$$
\xymatrix{
& M\ar[d]^{g}\\
M\ar[r]_{g} \ar[ur]^{\id_M}& K\ar[r]_{f}& L
}
$$
we see that $\scat(g)\leq n$. By 2), $f\circ g\sim *$.\\

Finally, suppose that 3) holds, and let $g\colon M\to K$ with $\scat(M)\leq n$. By Proposition \ref{map min}, $\scat(g)\leq \scat(M) \leq n$ so by Theorem \ref{scat lift}, $g$ factors through a simplicial complex $M'$ with $\gscat(M')\leq n$. By 3), $f\circ g \sim *$ and $\Es(f)\geq n$.
\end{proof}

Finally, the essential simplicial category of a composition bounds above the product of the essential simplicial category of each of the maps.

\begin{prop}\label{essential product}\cite[Cf. Theorem 9]{Strom98}
Let $f\colon K\to L$ and  $g\colon L \to M$. Then $\Es(g\circ f)\geq (\Es(g)+1)(\Es(f)+1)-1$.
\end{prop}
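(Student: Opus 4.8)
The plan is to work entirely through the equivalent characterization of essential simplicial category established in the Proposition immediately preceding the statement, namely that $\Es(h)\geq n$ holds if and only if $h\circ \varphi\sim *$ for every simplicial map $\varphi$ (from any complex) with $\scat(\varphi)\leq n$. Writing $a:=\Es(f)$ and $b:=\Es(g)$, and setting $N:=(a+1)(b+1)-1$, it therefore suffices to prove that $(g\circ f)\circ h\sim *$ for every $h\colon P\to K$ with $\scat(h)\leq N$; the conclusion $\Es(g\circ f)\geq N$ then follows at once from that same characterization applied to the composite $g\circ f\colon K\to M$.

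So fix such an $h$. Since $\scat(h)\leq N=(a+1)(b+1)-1$, there is a cover $P=U_0\cup\cdots\cup U_N$ by subcomplexes with $h|_{U_j}\sim *$ for every $j$, and crucially this cover has exactly $(a+1)(b+1)$ members. The key combinatorial step is to repackage these pieces: I would partition the index set $\{0,\ldots,N\}$ into $b+1$ blocks $G_0,\ldots,G_b$, each of size $a+1$, and set $V_k:=\bigcup_{j\in G_k}U_j$. The family $\{V_k\}_{k=0}^b$ still covers $P$, while each $V_k$ is itself covered by the $a+1$ subcomplexes $U_j$ with $j\in G_k$, on each of which $h$ is null. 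Hence $\scat(h|_{V_k})\leq a=\Es(f)$, and the characterization applied to $f$ gives $(f\circ h)|_{V_k}=f\circ(h|_{V_k})\sim *$ for every $k$.

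This shows that $\{V_k\}_{k=0}^b$ is a cover of $P$ by $b+1$ subcomplexes on which $f\circ h$ is null, so that $\scat(f\circ h)\leq b=\Es(g)$. Applying the characterization a second time, now to $g$, with the map $f\circ h\colon P\to L$, yields $g\circ(f\circ h)\sim *$, i.e. $(g\circ f)\circ h\sim *$, which is exactly what was needed. The heart of the argument --- and the only place any real idea is required --- is the regrouping of the cover together with the bookkeeping of the ``$+1$'' shifts: one must check that $(a+1)(b+1)$ pieces split evenly into $b+1$ groups of $a+1$, that this forces $\scat(h|_{V_k})\leq a$ rather than $\leq a+1$, and that the resulting cover of size $b+1$ forces $\scat(f\circ h)\leq b$. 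Everything else is a mechanical double application of the $\scat$-of-a-map characterization of $\Es$, so I expect no further obstacle.
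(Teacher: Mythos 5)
Your proof is correct, but it takes a genuinely different route from the paper's. The paper mirrors Strom's classical argument: given $h\colon Z\to K$ with $\scat(Z)\leq (p+1)(q+1)-1$, it groups the categorical cover into $q+1$ pieces $U_0,\ldots,U_q$, uses $\Es(f)=p$ to get $(f\circ h)|_{U_i}\sim *$, then cones off the pieces to build the auxiliary complex $Z'=Z\cup(\amalg\, CU_i)$ with $\scat(Z')\leq q$, extends $f\circ h$ over the cones (implicitly the mechanism of Lemma \ref{Cone map} and Theorem \ref{scat lift}), and applies the definition of $\Es(g)$ to the resulting map $Z'\to L$. You never leave the level of maps: you apply characterization 2) of the preceding equivalence twice, with the only idea being the regrouping of the $(a+1)(b+1)$ pieces of a cover witnessing $\scat(h)\leq N$ into $b+1$ blocks of $a+1$, giving $\scat(h|_{V_k})\leq a$, hence $(f\circ h)|_{V_k}\sim *$, hence $\scat(f\circ h)\leq b$, hence $g\circ(f\circ h)\sim *$. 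Your route buys two things: it needs no auxiliary complex and no extension-over-cones step, and it quietly repairs a delicate point in the paper's proof, which asserts ``$Z=U_0\cup\cdots\cup U_q$ with $\scat(U_i)\leq p$'' --- but the members of a categorical cover of $Z$ are only categorical \emph{in} $Z$, so a union of $p+1$ of them need not have absolute simplicial category at most $p$ (a subcomplex can be categorical in $Z$ while far from having trivial category in itself); the correct bound is on $\scat(h|_{U_i})$, which is precisely the quantity you track throughout. What the paper's construction buys in exchange is a direct simplicial analogue of Strom's original proof that showcases the factorization machinery of Section \ref{Factorization}. Two trivia in yours: a cover witnessing $\scat(h)\leq N$ may have fewer than $(a+1)(b+1)$ members, so pad it by repeating a piece; and the blocks $V_k$ may be disconnected, which is harmless since contiguity to a vertex map makes sense there (the paper's own proof glosses the same point).
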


\begin{proof}
Write $\Es(f)=p$ and $\Es(g)=q$ and let $h\colon Z\to M$ be any map with $\scat(Z)\leq (p+1)\cdot (q+1)-1$. Write $Z=U_0 \cup U_1\cup \ldots \cup U_q$ with $\scat(U_i)\leq p$. Since $\Es(f)=p$, we have $(f\circ h)|_{U_j}\sim *$ and  

$$
\xymatrix{
& & Z'\ar[d]^{i}\\
Z\ar[r]_{h} \ar[urr]^{}& K\ar[r]_{f}& L\ar[r]_{g}& M
}
$$

\noindent where $Z'=Z\cup (\amalg C U_i)$. Clearly $\scat(Z')\leq q$ and since $\Es(g)=q, g\circ i \sim *$, whence $(g\circ f)\circ h \sim *$. We conclude that $\Es(g\circ f)\geq (\Es(g)+1)(\Es(f)+1)-1$.
\end{proof}

The following is then immediate.

\begin{cor} $\Es(g \circ f)\geq \Es(f)$ and $\Es(g \circ f)\geq \Es(g)$.
\end{cor}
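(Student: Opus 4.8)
The plan is to read this off directly from Proposition \ref{essential product}. Writing $p=\Es(f)$ and $q=\Es(g)$, that proposition supplies
$$
\Es(g\circ f)\geq (q+1)(p+1)-1 = pq+p+q.
$$
To deduce $\Es(g\circ f)\geq \Es(f)=p$ it then suffices to check that $pq+p+q\geq p$, i.e. that $q(p+1)\geq 0$; symmetrically, $\Es(g\circ f)\geq \Es(g)=q$ amounts to $p(q+1)\geq 0$. So after invoking Proposition \ref{essential product}, the entire corollary reduces to verifying these two non-negativity statements.

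The one genuine point of care is that both reductions depend on $\Es(f)$ and $\Es(g)$ being non-negative, so first I would record that $\Es(h)\geq 0$ holds for every simplicial map $h\colon K\to L$. To see this, note that if $\scat(M)\leq 0$ then $M$ is covered by the single categorical subcomplex $U_0=M$, so $\id_M\sim *$; consequently, for any $g\colon M\to K$ we have $g=g\circ\id_M\sim g\circ *\sim *$, and hence $h\circ g\sim *$ as well. Thus the defining condition for $\Es(h)\geq 0$ is automatically satisfied, and $\Es(h)\geq 0$ for all $h$. With $p,q\geq 0$ established, the quantities $q(p+1)$ and $p(q+1)$ are products of non-negative integers, hence non-negative, and both inequalities follow immediately.

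The hard part, such as it is, is merely not to skip the non-negativity step: the algebraic bound $(q+1)(p+1)-1\geq\max\{p,q\}$ is false for arbitrary integers—should either of $p,q$ equal $-1$, the product degenerates—so the argument truly relies on the range of values that $\Es$ can take. Once non-negativity is in hand, no further computation is required, and the corollary is immediate.
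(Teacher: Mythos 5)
Your proposal is correct and is exactly the argument the paper intends: the paper states the corollary as ``immediate'' from Proposition \ref{essential product}, and your deduction $(p+1)(q+1)-1\geq\max\{p,q\}$ for $p,q\geq 0$ is precisely that implicit step. Your extra care in verifying $\Es(h)\geq 0$ for every map $h$ (via $\scat(M)\leq 0$ forcing $\id_M\sim *$, hence $g\sim *$) is a worthwhile filling-in of the one detail the paper leaves unsaid, not a departure from its route.
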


\section{Future directions}\label{Future directions}

The simplicial category of a simplicial map is a natural generalization of the recently defined simplicial category of a simplicial complex. There are many directions that the work in this paper, and simplicial LS category in general, may go.  

One important use of the category of a map was in studying special cases and  examples which pertained to the Ganea conjecture. Could the simplicial category of a map be used to study a simplicial version of the Ganea conjecture and related questions?  Currently, because different triangulations of spheres may have different strong homotopy type, we do not even know the simplicial category of all simplicial complexes homeomorphic to a sphere.  

In \cite{F-M-M-V}, the authors give a Whitehead definition of simplicial category in terms of factoring through a simplicial fat wedge. While it was shown that this bounds above the simplicial category, the two strong homotopy invariants, unlike the classical case, are not in general equal. One could give a Whitehead definition of the simplicial category of a map and study it as well as its relationship to the simplicial category of a map defined in this paper.   

We have only scratched the surface of the use of the essential simplicial category.  For example, given a complex $K$, can one construct maps $f_K$ such that $\scat(K=\Es(f_K)+1$? Strom has used cohomology to produce relations with essential category in the smooth case. How can cohomology provide information to estimate and compute simplicial category?

\bibliographystyle{amsplain}
\bibliography{EssentialCat}
\end{document}